\documentclass[11pt]{article}

\usepackage{pdfsync}

\usepackage{a4wide}
\usepackage{amsmath,amssymb}
\usepackage{amsfonts}
\usepackage[utf8]{inputenc}
\usepackage[english]{babel}
\usepackage{color}
\usepackage{hyperref}
\usepackage{enumerate}
\usepackage{graphicx}
\usepackage[all]{xy}
\usepackage{amsthm}
\usepackage{xkeyval,calc,listings, fp}
\usepackage{CJKutf8}
\usepackage{bm}
\usepackage{wrapfig}
\usepackage{caption}
\usepackage[normalem]{ulem}
\usepackage[colorinlistoftodos]{todonotes}
\usepackage{cleveref}
\usepackage[round, authoryear]{natbib}

\newtheorem{lemma}{Lemma}

\newtheorem{theorem}{Theorem}
\newtheorem{corollary}{Corollary}

\newtheorem{examp}{Example}
\newtheorem{remark}{Remark}
\newtheorem*{mainthm}{Main Theorem}

\newcommand{\produ}[1]{\langle #1\rangle}

\newcommand{\reals}{\mathbb{R}}

\newcommand{\cof}[1]{s_{#1}}
\newcommand{\bef}[2]{\beta^{#1}_{#2}}
\newcommand{\tF}{\mathcal{F}}
\newcommand{\Ric}{\mathrm{Ric}}
\newcommand{\CD}{\mathrm{CD}}
\newcommand{\A}{\mathrm{A}}
\newcommand{\U}{\mathbf{U}}
\newcommand{\J}{\mathbf{J}}
\newcommand{\e}{\mathrm{e}}
\newcommand{\V}{\mathbf{V}}
\newcommand{\D}{\mathrm{D}}

\title{Rigidity of the Borell-Brascamp-Lieb Inequality on Weighted Riemannian Manifolds}
\author{Rongkai Zhang\thanks{e-mail: u284872k@ecs.osaka-u.ac.jp} \\Graduate School of Science, the University of Osaka, Osaka 560-0043, Japan }

\begin{document}
\maketitle

\begin{abstract}
In this paper we discuss some results regarding the rigidity of the Borell-Brascamp-Lieb inequality and the Brunn-Minkowski inequality. We show a theorem of rigidity on curvature and measure of the Borell-Brascamp-Lieb inequality, a generalisation of the curvature rigidity theorem by Balogh-Krist\'aly in \citep{BaloghZoltanM.2018EiBi} to the weighted setting. We present some rigidity results of the Brunn-Minkowski inequality and a few further open problems.
\end{abstract}

\section{Introduction}

Since the seminal works on the application of optimal transport in geometric analysis including \citep{OttoF.2000GoaI}, \citep{STURMKarl-Theodor2006Otgo} and \citep{LottJohn2009Rcfm}, it became possible to study a wide range of analytical problems on Riemannian manifolds using the tool of optimal transport. In particular, the theory of optimal transport has been a powerful tool in the study of functional and geometrical inequalities on Riemannian manifolds with curvature and dimension bounds.

First proved in 1952 by Henstock and MacBeth on $\reals$ by \citet{HenstockR.1953OtMo} and later developed by \citet{BorellC.1975Csfi} and \citet{BrascampHermJan1976Oeot}, the Borell-Brascamp-Lieb inequality is an integral inequality concerned with the $L^1$-norm of two functions and that of the mean of them.

On the $n$-dimensional Euclidean space $\reals^n$, the Borell-Brascamp-Lieb inequality states that if functions $f,g,h >0$ such that for $s \in (0,1)$, $$h((1-s)x + sy) \geq \mathcal M^p_s(f(x),g(y)) \quad \forall (x,y) \in \reals^n \times \reals^n,$$
then $$\|h\|_{L^1} \geq \mathcal M^{\frac{p}{1+np}}_s(\|f\|_{L^1},\|g\|_{L^1})$$
where for $p \in [-\frac{1}{n},\infty] $ and real numbers $a,b \geq 0$,
 \begin{eqnarray*}
        \mathcal M^p_s(a,b):= \left\{ \begin{array}{cl}
        ((1-s)a^p + sb^p)^\frac{1}{p} \quad & \mbox{when $ab \neq 0, $}\\
        0 \quad & \mbox{when $ab=0$}\\
    \end{array}
    \right.
    \end{eqnarray*}
is the $p$-mean, with the conventions $\mathcal M^{\infty}_s(a,b)= \max\{a,b\}$, $\mathcal{M}^0_s(a,b) = a^{1-s}b^s$ and $\mathcal{M}^{-\infty}_s=\min\{a,b\}$.

The Borell-Brascamp-Lieb inequality was proved on Riemannian manifolds by Cordero-Erausquin, McCann and Schmuckenschl\"ager in \citep{Cordero-ErausquinDario2001ARii}. It was then generalised further to metric measure spaces by \citet{BacherKathrin2010OBIo}. The Borell-Brascamp-Lieb inequality implies a number of functional and geometric inequalities including the Brunn-Minkowski inequality and the Pr\'ekopa-Leindler inequality, a diagram showing their relationship can be seen in \citep{GardnerR.J.2002TBi}.

Rigidity of the Borell-Brascamp-Lieb inequality on $\reals^n$ has been studied since 1970s, various papers including \citep{DubucSerge1977Cdce} since then have discussed this problem. Rigidity of it on (unweighted) Riemannian manifolds with curvature bounded from below was shown by Balogh-Krist\'aly in \citep{BaloghZoltanM.2018EiBi}. 

This paper generalises the curvature rigidity result of the Borell-Brascamp-Lieb inequality on Riemannian manifolds by Balogh and Krist\'aly, Theorem 4.1 of \citep{BaloghZoltanM.2018EiBi}, to weighted Riemannian manifolds. The main result of this paper states that the rigidity of the Borell-Brascamp-Lieb inequality implies identical sectional curvatures and certain behaviour of the measure.

\begin{mainthm}[Theorem \ref{rigthmgen}]
    On a weighted Riemannian manifold $(M,g,m = \e^{-\psi}\mathrm{vol}_g)$, satisfying $\Ric_{m,N} \geq Kg$ where $n \leq N < \infty$ and $K \in \reals$, equality of the Borell-Brascamp-Lieb inequality implies that the $N$-Ricci curvature along the geodesic of transport $\gamma(t):= \tF_t(x)$ equals $K$ and the sectional curvatures along the geodesic $\gamma$ are identically $\frac{K}{N-1}$. The measure behaves as an $(N-n)$-th power of a linear combination of trignometric functions along the geodesic $\gamma(t)$ when $K>0$; as an $(N-n)$-degree polynomial of $t$ along $\gamma(t)$ when $K=0$; and as an $(N-n)$-th power of of a linear combination of hyperbolic functions along $\gamma(t)$ when $K<0$.
\end{mainthm}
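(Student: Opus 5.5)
We will follow the optimal transport proof of the Borell--Brascamp--Lieb inequality of Cordero-Erausquin--McCann--Schmuckenschl\"ager, adapted to the weighted setting, and track where each inequality must become an equality.

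\textbf{Step 1: the chain of inequalities.} Let $\tF_t(x)=\exp_x(-t\nabla\phi(x))$ be the optimal transport interpolation between the normalised densities of $f$ and $g$, and $\gamma(t)=\tF_t(x)$. Write the weighted Jacobian $\J_t(x)=\e^{-\psi(\gamma(t))+\psi(x)}\det D\tF_t(x)$, so that $\|h\|_{L^1}\ge\int h(\tF_s(x))\J_s(x)\,dm(x)$. We then decompose the Jacobian in the standard way. The unweighted part is $\det D\tF_t=\det \mathbf{A}(t)$, where $\mathbf{A}$ solves the Jacobi equation $\mathbf{A}''+R(t)\mathbf{A}=0$ along $\gamma$. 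Splitting $\J_t^{1/N}$ into an $n$-dimensional part $\det\mathbf{A}(t)^{1/n}$ and a weight part $\e^{-\psi(\gamma(t))/(N-n)}$ turns the Riccati identity for $\mathbf{U}=\mathbf{A}'\mathbf{A}^{-1}$ into the differential inequality
\[
(\J_t^{1/N})''\le -\frac{\Ric_{m,N}(\dot\gamma,\dot\gamma)}{N}\J_t^{1/N}.
\]
Comparing with the model equation gives the distortion inequality $\J_s^{1/N}\ge\tau^{(1-s)}_{K,N}\J_0^{1/N}+\tau^{(s)}_{K,N}\J_1^{1/N}$. Combining this with the Monge--Amp\`ere equation and the pointwise hypothesis $h\ge\mathcal M^p_s(f,g)$, and then applying H\"older's inequality for $p$-means, yields the inequality.

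\textbf{Step 2: equality forces equality at each step.} If the Borell--Brascamp--Lieb inequality is an equality, then every step holds with equality for $m$-a.e.\ $x$. By continuity in $t$, this includes the Jacobian comparison for all $s$. The inequality above came from three estimates, and equality in it forces equality in each. First, the Cauchy--Schwarz bound $\mathrm{tr}(\U^2)\ge(\mathrm{tr}\,\U)^2/n$ is sharp, so $\U(t)=u(t)I$ with $\U$ symmetric. Second, the elementary bound $\frac{a^2}{n}+\frac{b^2}{N-n}\ge\frac{(a+b)^2}{N}$, used to merge the unweighted and weight parts, is sharp. This gives the proportionality
\[
\frac{\mathrm{tr}\,\U}{n}=-\frac{(\psi\circ\gamma)'}{N-n}.
\]
Third, the curvature bound is sharp: $\Ric_{m,N}(\dot\gamma,\dot\gamma)=K|\dot\gamma|^2$, since the comparison ODE only becomes an equality when $\J^{1/N}$ solves $y''=-\frac{K|\dot\gamma|^2}{N}y$ exactly. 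This proves the first claim.

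\textbf{Step 3: sectional curvature and the weight.} Substituting $\U=uI$ into the Riccati equation $\U'+\U^2+R=0$ shows that $R(t)=-(u'+u^2)I$ is a multiple of the identity on $\dot\gamma^\perp$. So all sectional curvatures of planes containing $\dot\gamma$ are equal, say to $\kappa(t)$. Next write $y=\J^{1/N}$, which satisfies $y''=-\frac{K|\dot\gamma|^2}{N}y$, and $y=a^{n/N}w^{(N-n)/N}$ with $a=(\det\mathbf{A})^{1/n}$ and $w=\e^{-\psi/(N-n)}$. The proportionality from Step 2 gives $a'/a=w'/w$, so $a=cw$ and hence $a$ is itself proportional to $y$. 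Then $a''=-\frac{\mathrm{tr}R}{n}a=-\frac{(n-1)\kappa}{n}a$, and comparing with the ODE for $y$ gives $\kappa(t)$ constant.

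\textbf{Step 4: main obstacle.} The hardest point is pinning this constant to $\frac{K}{N-1}$, normalised by $|\dot\gamma|^2$. The identity above only yields $(n-1)\kappa/n=K/N$. We must therefore also use the second equality $\Ric_{m,N}=K$ together with the Hessian of $\psi$ along $\gamma$. Equivalently, equality in the Bakry--\'Emery decomposition forces $\Ric(\dot\gamma)=(n-1)\kappa$ and $\nabla^2\psi-\frac{d\psi\otimes d\psi}{N-n}$ to take the values dictated by $w''=-\frac{K}{N}w$. We would first solve this bookkeeping carefully, since it is where the paper's normalisation enters. The explicit formulas then follow directly. Since $w=\e^{-\psi/(N-n)}$ solves $w''=-\frac{K|\dot\gamma|^2}{N}w$, it is a combination of $\cos,\sin$ when $K>0$, affine when $K=0$, and of $\cosh,\sinh$ when $K<0$. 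Hence $\e^{-\psi(\gamma(t))}=w^{N-n}$ has the stated form.
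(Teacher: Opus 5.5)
There is a genuine gap in Step~1, and everything downstream depends on it. Your split of $\J_t^{1/N}$ into $(\det\mathbf{A})^{1/n}$ and $\e^{-\psi/(N-n)}$ does give a correct inequality, $(\J^{1/N})''\le-\frac{\Ric_{m,N}(\dot\gamma)}{N}\J^{1/N}$. But comparing it with $y''=-\frac{Kr^2}{N}y$ (where $r=|\dot\gamma|$) yields only the coefficients $\sigma^{(t)}(r)=\cof{K,N+1}(tr)/\cof{K,N+1}(r)$. It does not yield $\tau^{(t)}(r)=t\,\bef{t}{K,N}(r)^{1/N}=t^{1/N}\big(\cof{K,N}(tr)/\cof{K,N}(r)\big)^{(N-1)/N}$, which are the coefficients in the inequality whose equality case is at stake. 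For $K\neq0$, $r>0$ and $t\in(0,1)$, one has the standard strict inequality $\sigma^{(t)}(r)<\tau^{(t)}(r)$.

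Equality in Borell--Brascamp--Lieb gives $\J_{t_0}^{1/N}=\tau^{(1-t_0)}+\tau^{(t_0)}\J_1^{1/N}$. This makes your $\sigma$-comparison strict at $t_0$, so your ODE inequality cannot be an equality on $[0,1]$. Hence none of the three ``sharp'' conclusions of Step~2 follows. They are in fact false in the genuine equality case. If $\U=uI$ on all of $T_{\gamma(t)}M$, the $\dot\gamma\dot\gamma$-entry of the Riccati equation (where $R(\dot\gamma,\dot\gamma)\dot\gamma=0$) gives $u'+u^2=0$. Then $R\equiv0$ on $\dot\gamma^\perp$, i.e.\ the sectional curvature is $0$ rather than $\frac{K}{N-1}$. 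The mismatch $(n-1)\kappa/n=K/N$ you hit in Step~4 is exactly this contradiction, not a normalisation issue. Likewise your equation $w''=-\frac{K|\dot\gamma|^2}{N}w$ has the wrong constant. For $K=0$, where $\sigma=\tau=t$, your argument does go through.

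The missing idea is Sturm's splitting from Lemma~\ref{cvxwtj}. First isolate the one-dimensional factor along $\dot\gamma$: $f=\e^{\lambda}$ with $\lambda'=\U_{11}$, which is concave and sees no curvature. Then put the weight together with the transversal block $\V=(\U_{ij})_{i,j\ge2}$ into $u=J^\psi/f$. Cauchy--Schwarz on $\V$ and $\frac{a^2}{n-1}+\frac{b^2}{N-n}\ge\frac{(a+b)^2}{N-1}$ give $(u^{\frac{1}{N-1}})''\le-\frac{\Ric_{m,N}(\dot\gamma)}{N-1}u^{\frac{1}{N-1}}$, and H\"older then produces $\tau$.

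In the equality case, first show that $f$ is affine and $u^{1/(N-1)}$ solves $v''=-\frac{Kr^2}{N-1}v$ on all of $[0,1]$. This needs a maximum-principle comparison as in the paper's lemma; equality at one $t_0$ does not propagate ``by continuity in $t$''. From this you read off:
\begin{itemize}
\item $\Ric_{m,N}(\dot\gamma)=Kr^2$;
\item $\V=\xi I_{n-1}$ and $\U_{1j}=0$ for $j\ge2$;
\item $-(\psi\circ\gamma)'=(N-n)\xi$;
\item hence $\xi'+\xi^2=-\frac{Kr^2}{N-1}$.
\end{itemize}
The last identity gives sectional curvature $\frac{K}{N-1}$ via Riccati, and $w''=-\frac{Kr^2}{N-1}w$ for $w=\e^{-\psi/(N-n)}$. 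Note that $\U_{11}$ solves $\U_{11}'+\U_{11}^2=0$, so $\U_{11}\neq\xi$ when $K\neq0$; this is precisely why $\U$ is not a multiple of the identity.
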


The article is structured as follows. In section 2, we first demonstrate a proof of an interpolation inequality on weighted Riemannian manifolds, followed by a proof of the Borell-Brascamp-Lieb inequality on weighted Riemannian manifolds. In section 3, we discuss the rigidity of the Borell-Brascamp-Lieb inequality and prove the main theorem, Theorem \ref{rigthmgen}, and in section 4 we discuss the Brunn-Minkowski inequality and its rigidity, as a consequence of the rigidity of the Borell-Brascamp-Lieb inequality. At the end in section 5 we present a few related open questions.

\section*{Acknowledgements}
I would like to express my deep gratitude to Prof. Shin-ichi Ohta for his guidance, suggestions and discussions. I would also like to thank Dr. Hiroshi Tsuji and Mr. Zisu Zhao for some helpful discussions.

\noindent This work was supported by JST SPRING, Grant Number JPMJSP2138.

\section{Preliminaries}
Throughout the paper, we consider a smooth weighted Riemannian manifold $(M,g,m)$ of dimension $n$, where $m=\e^{-\psi}\mathrm{vol}_g$, satisfying the condition $\Ric_{m,N}(\cdot) \geq Kg(\cdot, \cdot)$, $N \in [n,\infty)$ and $K \in \reals$. Recall that the $N$-Ricci curvature $\Ric_{m,N}$ is defined to be: 
\begin{eqnarray*}
    \Ric_{m,N}(v) := \left\{ \begin{array}{cl}
        \Ric_m(v) - \frac{(\partial_v \psi)^2}{N-n} \quad & \mbox{for $N \in (n,\infty)$},\\
        \Ric_m(v) \quad & \mbox{for $N =\infty$},\\
        \Ric(v) \quad & \mbox{for $N=n$, $\psi = \mathrm{const.}$}
    \end{array}
    \right.
\end{eqnarray*}
where $\Ric_m(v) = \Ric(v) + \mathrm{Hess}_\psi(v,v) $.

We denote $\mathcal{P}(M)$ to be the space of Borel probability measures on $M$, and let $\mathcal{P}^2(M)$ be the space of measures with finite second moment
$$\mathcal{P}^2(M) = \Big\{ \mu \in \mathcal{P}(M) \Big| \int_M d(x,x_0)^2 d\mu < \infty  \text{ for arbitrary } x_0 \in M \Big\}.$$

By the theorem of Brenier-McCann, given two measures on $(M,g,m)$, $\mu_0=\rho_0 m = \rho_0\e^{-\psi}\mathrm{vol}_g$ and $\mu_1=\rho_1 m = \rho_1\e^{-\psi}\mathrm{vol}_g$ in $\mathcal{P}^2(M)$ that are absolutely continuous with respect to $m$, there is a unique optimal transference plan from $\mu_0$ to $\mu_1$, given by $\tF_t$, $\mu_t = (\tF_t)_{\#}\mu_0$, the push-forward of $\mu_0$ by $\tF_t(x):=\exp_x(t\nabla f(x))$, and $f:M \rightarrow \reals$ is a $\frac{d^2}{2}$-convex function. We refer to \citep{alma995800334401591} for the fundamentals of optimal transport theory, see also \citep{McCannR.J.2001Pfom} and \citep{FigalliAlessio2011LsoK} for the theory of optimal transport on Riemannian manifolds.

We consider the \textit{weighted Jacobian} $J^\psi_t(x) := \e^{\psi(x) - \psi(\tF_t(x))} \|(D\tF_t(x))\|$ along the geodesic $\gamma(t):= \tF_t(x)$. We have the following concavity condition on the weighted Jacobian: 
\begin{equation}\label{eqn2}
    J^\psi_t(x)^{\frac{1}{N}} \geq (1-t) \bef{1-t}{K,N}(d(x, \tF_1(x)))^{\frac{1}{N}} + t \bef{t}{K,N}(d(x, \tF_1(x)))^{\frac{1}{N}}J^\psi_1(x)^{\frac{1}{N}}
\end{equation}
for $\mu_0$-a.e. $x$ such that $\tF_t$ is differentiable at $x$ and all $t \in [0,1]$, with the function $\bef{t}{K,N}$ being defined as:
$$ \bef{t}{K,N}(r) = \Big( \frac{\cof{K,N}(tr)}{t\cof{K,N}(r)} \Big)^{N-1} $$
where 
\begin{eqnarray*}
    \cof{K,N}(r) := \left\{ \begin{array}{cl}
        \sqrt{\frac{N-1}{K}} \sin \Big(r \sqrt{\frac{K}{N-1}}\Big) \quad & \mbox{if $K >0$},\\
        r \quad & \mbox{if $K =0$},\\
        \sqrt{-\frac{N-1}{K}} \sinh \Big( r \sqrt{\frac{-K}{N-1}} \Big) \quad & \mbox{if $K<0$}.
    \end{array}
    \right.
\end{eqnarray*}
It is known that the density function $\rho_t$ given by $\mu_t = \rho_t m$ satisfies the Monge-Amp\`ere equation
\begin{equation} \label{MAE}
    \rho_0(x) = \rho_t(\tF_t(x))J^\psi_t(x) \text{ for } \mu_0\text{-a.e. } x.
\end{equation} 

The interpolation inequality \eqref{eqn2} on unweighted Riemannian manifolds was shown in the proof of Theorem 1.7 of \citep{STURMKarl-Theodor2006Otgo}, and its Finsler version was shown in the proof of Theorem 1.2 of \citep{OhtaShin-ichi2009Fii}. Here we present a proof on weighted Riemannian manifolds satisfying $\Ric_{m,N} \geq Kg$, along the computation shown in the proof of Theorem 1.7 of \citep{STURMKarl-Theodor2006Otgo}, with an addition of the factor due to measure to the proof in \citep{STURMKarl-Theodor2006Otgo}, for later use. See also pages 393-401 of \citep{alma999885620001591}.
\begin{lemma} \label{cvxwtj}
    On a weighted Riemannian manifold $(M,g,m = e^{-\psi(x)}\mathrm{vol}_g)$ satisfying $\Ric_{m,N} \geq Kg$, the weighted Jacobian $J^\psi_t(x) := e^{\psi(x) - \psi(\tF_t(x))} \|(D\tF_t(x))\|$ defined along the geodesic $\gamma(t):= \tF_t(x)$, where $\tF_t(x)=\exp_x(t\nabla f(x))$ described as above satisfies the concavity condition \eqref{eqn2}.
\end{lemma}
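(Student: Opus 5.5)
We follow Sturm's Jacobi field computation (proof of Theorem 1.7 in \citep{STURMKarl-Theodor2006Otgo}) and add the contribution of the weight. Fix $x$ such that $\tF_t$ is differentiable at $x$ and $\tF_\cdot(x)$ is a minimal geodesic; this holds for $\mu_0$-a.e.\ $x$ by the theory of \citep{McCannR.J.2001Pfom}, \citep{FigalliAlessio2011LsoK}, since $\frac{d^2}{2}$-convex functions are semiconcave and hence twice differentiable a.e. Let $\gamma(t)=\tF_t(x)$ and $r=d(x,\tF_1(x))=|\dot\gamma|$. If $r=0$, then $\tF_t(x)=x$ for all $t$ and the inequality is immediate, so assume $r>0$.

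Let $e_1,\dots,e_n$ be an orthonormal basis at $x$ with $e_n=\dot\gamma(0)/r$. The Jacobi fields $E_i(t)=D\tF_t(x)e_i$ satisfy $E_i(0)=e_i$ and $\dot E_i(0)=\nabla^2 f(x)e_i$. We write $D\tF_t(x)=A(t)$, a matrix-valued solution of $\ddot A+R A=0$ in a parallel frame, where $R(t)=R(\cdot,\dot\gamma)\dot\gamma$. Set $U=\dot A A^{-1}$. This matrix is symmetric because the Jacobi fields come from the gradient of $f$, and it satisfies the Riccati equation $\dot U+U^2+R=0$.

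Decompose $\log\|A(t)\|=\log y_1(t)+(n-1)\log y_2(t)$, separating the direction $\dot\gamma$ from its orthogonal complement (Sturm's splitting). Concretely, use $\ell(t)=\log\det A(t)$, for which $\ddot\ell=-\mathrm{tr}(U^2)-\Ric(\dot\gamma,\dot\gamma)$. Write $h(t)=\psi(x)-\psi(\gamma(t))$, so that $\ddot h=-\mathrm{Hess}\psi(\dot\gamma,\dot\gamma)$ and $\log J^\psi_t=\ell+h$.

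Next split $\mathrm{tr}(U^2)\ge U_{nn}^2+\frac{(\mathrm{tr}\,U-U_{nn})^2}{n-1}$. The tangential part $\lambda=\log$ of the $\dot\gamma$-component (with $\dot\lambda=U_{nn}$ up to a nonpositive error) is the factor giving the "$1-t$, $t$" linear weight. The remaining part $\eta=\ell-\lambda+h$ then satisfies
\[
\ddot\eta\le -\frac{(\dot\ell-\dot\lambda)^2}{n-1}-\Ric(\dot\gamma,\dot\gamma)-\mathrm{Hess}\psi(\dot\gamma,\dot\gamma).
\]
Apply the Cauchy--Schwarz inequality
\[
\frac{a^2}{n-1}+\frac{b^2}{N-n}\ge\frac{(a+b)^2}{N-1}
\]
with $a=\dot\ell-\dot\lambda$ and $b=\dot h=-\partial_{\dot\gamma}\psi$. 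Together with $\Ric_{m,N}(\dot\gamma)\ge Kr^2$, this gives
\[
\ddot\eta\le-\frac{\dot\eta^2}{N-1}-Kr^2 .
\]
Equivalently, $\phi=e^{\eta/(N-1)}$ satisfies $\ddot\phi+\frac{Kr^2}{N-1}\phi\le0$. Comparing with solutions of the Sturm equation, via the standard lemma on $(K,N)$-concavity, yields
\[
\phi(t)\ge\frac{\cof{K,N}((1-t)r)}{\cof{K,N}(r)}\phi(0)+\frac{\cof{K,N}(tr)}{\cof{K,N}(r)}\phi(1).
\]
For the $\lambda$-part, $\ddot{(e^{\lambda})}\le0$, so $e^\lambda$ is concave in $t$.

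Finally, combine the two pieces by Hölder's inequality
\[
(a_1+b_1)^{1/N}(a_2+b_2)^{(N-1)/N}\ge a_1^{1/N}a_2^{(N-1)/N}+b_1^{1/N}b_2^{(N-1)/N}.
\]
Applied to $J_t^{\psi\,1/N}=e^{\lambda/N}\phi^{(N-1)/N}$, this produces exactly \eqref{eqn2}, since
\[
(1-t)\bef{1-t}{K,N}(r)^{1/N}=(1-t)^{1/N}\Big(\frac{\cof{K,N}((1-t)r)}{\cof{K,N}(r)}\Big)^{(N-1)/N}.
\]
Here we use $J^\psi_0=1$.

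The main obstacle is the rigorous splitting of $\det A$ into the $\dot\gamma$-direction and the orthogonal directions so that each part obeys the claimed differential inequality. $A$ need not preserve the orthogonal decomposition, so we follow Sturm and Villani (pp.\ 393--401 of \citep{alma999885620001591}). Work with the block decomposition of $U$ and drop the off-diagonal terms, which only contribute with a favourable sign to $\mathrm{tr}(U^2)$. Define $\lambda$ via $\dot\lambda=U_{nn}$, which is legitimate since $\langle\dot E,\dot\gamma\rangle$ is linear in $t$, making the $\dot\gamma$-component affine. A secondary technical point is justifying differentiability and nondegeneracy of $A(t)$ for $t\in[0,1)$, i.e.\ no conjugate points before time $1$; this follows from the $\frac{d^2}{2}$-convexity of $f$ (McCann's argument). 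The case $t=1$ is then obtained by continuity.
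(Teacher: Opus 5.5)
Your proposal is correct and is essentially the paper's proof. It uses the same Sturm-type splitting, with $\dot\lambda=U_{nn}$ making $e^{\lambda}$ (the paper's $f$) concave, and the same Cauchy--Schwarz step merging the $n-1$ transverse directions with the weight into $\ddot\eta\le-\dot\eta^2/(N-1)-Kr^2$; your $\eta$ and $\phi$ are the paper's $\alpha$ and $u^{1/(N-1)}$, and the comparison and H\"older steps are also the same. One small correction: the split $\mathrm{tr}(U^2)\ge U_{nn}^2+\frac{(\mathrm{tr}\,U-U_{nn})^2}{n-1}$ is not the one you actually need. With $\dot\lambda=U_{nn}$ exactly (no ``error''), the Riccati equation gives $\ddot\lambda=-\sum_j U_{nj}^2$, hence $\ddot\ell-\ddot\lambda=-\sum_{i\ne n}\sum_j U_{ij}^2-\mathrm{Ric}(\dot\gamma,\dot\gamma)$, and your bound on $\ddot\eta$ follows from this identity.
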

\begin{proof}
We fix $x \in \mathrm{supp} \mu_0$ on $M$, and consider the geodesic $$\gamma(t): [0,1] \rightarrow M, t \mapsto \tF_{t}(x)$$ representing the path of transport starting from $x$. 

When $\gamma(t)$ is constant, inequality \eqref{eqn2} is trivial as $J_t^\psi(x) = 1$ identically.

When $\gamma(t)$ is nonconstant, at $x$, we consider the tangent space $T_xM = T_{\tF_0(x)}M$ and an orthonormal frame $\{ e_0^i \}_{i=1}^n$, where we denote $e^1_0$ to be the unit tangent of the geodesic: $\frac{\gamma'(0)}{\|\gamma'(0)\|}$. We can thus define $n-1$ Jacobi fields $\J_i(t) = \frac{\partial}{\partial s}\exp_x(t(\nabla f(x) + se^i_0))$ for $i \in \{2,...,n\}$ that are orthogonal to $\gamma'(t)$, and the vectors $\{\J_i(t)\}$ form a basis of $\{\gamma'(t)\}^\perp \subset T_{\tF_t(x)}M$ along the entire geodesic $\gamma$. 

We denote $\A_t(x)$ to be the differential map:
$$\A_t(x) := \D\tF_t(x): T_xM \rightarrow T_{\tF_t(x)}M.$$
It is known that $\A_t$ satisfies the Riccati equation $$\D_t\D_t\A_t(x) + R(\A_t(x),\gamma'(t))\gamma'(t)=0$$ with the initial condition $A_0=\mathrm{Id}$ and $\D_tA_t|_{t=0} = -\mathrm{Hess} \psi$, see page 142 of \citep{STURMKarl-Theodor2006Otgo}. 

Since $\{J_i\}_{i \geq 2}$ form a basis of the vector space $\{\gamma'(t)\}^\perp$, and since for $i \geq 2$, $\produ{\J_i'(t),\gamma'}=0$ by Gau{\ss} lemma, we can define, as in \citep{STURMKarl-Theodor2006Otgo}, the matrix $\U(t)$ to be such that $\U(t) \A_t := \D_t\A_t$. And the Riccati equation can also be written as
$$\U' + \U^2 + R\A_t^{-1}=0 \text{, or } \U' + \U^2 + R( \cdot, \partial_t\tF_t)\partial_t\tF_t =0.$$

At $\gamma(t)$ we set an orthonormal basis $\{e^i_t\}$ with $e^1_t := \gamma'(t)/\|\gamma'(t)\|$ being the tangent vector of $\gamma(t)$ of unit length. In terms of this basis, we set $\U_{ij}(t)=g(e^i_t,\U(t)e^j_t)$, $\lambda_t = 1 + \int_{0}^{t}U_{11}(s)ds$, and $f(t) = \exp(\lambda_t)$, the Riccati equation implies that
$$\frac{\mathrm{d}}{\mathrm{d}t}\U_{11}(t) + \sum_{j=1}^{n} \U_{1j}^2 =0 \text{, therefore } -\lambda'' = -\frac{\mathrm{d}}{\mathrm{d}t} \U_{11}(t) = \sum_{j=1}^{n} \U_{1j}^2 \geq \U_{11}^2 = (\lambda')^2$$
which implies further that:
$$f'' = \lambda''f+ (\lambda')^2f \leq 0.$$

We let $y_t := \mathrm{log}J_t$ where $J_t:= \mathrm{det}\A_t = \|\D\tF_t(x)\|$, $y^\psi_t:= \mathrm{log}J^\psi_t = \psi(x) -\psi \circ \tF_t(x) + y_t$ and $\alpha_t := y^\psi_t - \lambda_t$. Define
$$u(t) := e^{\alpha_t} = \frac{J^\psi_t}{f(t)},$$
since $y' = \mathrm{tr}\U$, $y'' = \mathrm{tr}(\U')$, we have
\begin{eqnarray*}
    \alpha'' = y'' - \lambda_t'' - (\psi \circ \tF_t)'' &=& -\Ric(\gamma') - \mathrm{tr}(\U^2) - \U_{11}' - \mathrm{Hess}_\psi(\gamma',\gamma') \\
    & = & -\Ric(\gamma') - \sum_{i,j=1}^{n}\U_{ij}^2 + \sum_{j=1}^n \U_{1j}^2 - \mathrm{Hess}_\psi(\gamma',\gamma')\\
    & \leq & -\Ric(\gamma') - \sum_{i,j = 2}^{n} \U_{ij}^2 - \mathrm{Hess}_\psi(\gamma',\gamma')\\
    &\leq& -\Ric(\gamma') - \frac{1}{n-1} \Big(\sum_{i=2}^{n}\U_{ii}\Big)^2 - \mathrm{Hess}_\psi(\gamma',\gamma'). \\
\end{eqnarray*}
By the Cauchy-Schwarz inequality, it follows that
\begin{eqnarray} \label{keyCSineqapp}
    \alpha'' = y'' - \lambda_t'' - (\psi \circ \tF_t)'' &\leq& -\Ric(\gamma') - \frac{(y'-\lambda')^2}{n-1} - \mathrm{Hess}_\psi(\gamma',\gamma') \nonumber \\
    &=& - \Ric_{m,N}(\gamma') - \frac{(\partial_{\gamma'}\psi)^2}{N-n} - \frac{(y'-\lambda')^2}{n-1} \nonumber \\ 
    & \leq & -\Ric_{m,N}(\gamma') - \frac{(\alpha')^2}{N-1}.
\end{eqnarray}

Inequality \eqref{keyCSineqapp} and the hypothesis $\Ric_{m,N} \geq K$ imply that $u$ satisfies the differential inequality, as in pages 143-144 of \citep{STURMKarl-Theodor2006Otgo},
\begin{eqnarray*}
    (u^\frac{1}{N-1})'' \leq -\frac{\Ric_{m,N}(\gamma')}{N-1} u^\frac{1}{N-1} \leq -\frac{Kr^2}{N-1} u^\frac{1}{N-1} \text{, where } r = d(x,\tF_1(x))
\end{eqnarray*}
which implies that $u^\frac{1}{N-1}(t)$ satisfies the concavity condition 
$$u^\frac{1}{N-1}(t) \geq \frac{\cof{K,N}((1-t)r)}{\cof{K,N}(r)}u^\frac{1}{N-1}(0) + \frac{\cof{K,N}(tr)}{\cof{K,N}(r)}u^\frac{1}{N-1}(1).$$

After multiplying $f^\frac{1}{N}$ and $u^\frac{1}{N}$ and applying the H\"older's inequality, we have
\begin{eqnarray*}
    (J^\psi_t)^\frac{1}{N} &=& (f(t)u(t))^\frac{1}{N} = f^\frac{1}{N}(t)(u^\frac{1}{N-1}(t))^\frac{N-1}{N} \\
    & \geq & ((1-t)f(0) + tf(1))^\frac{1}{N} \Big(\frac{\cof{K,N}((1-t)r)}{\cof{K,N}(r)}u^\frac{1}{N-1}(0) + \frac{\cof{K,N}(tr)}{\cof{K,N}(r)}u^\frac{1}{N-1}(1)\Big)^\frac{N-1}{N} \\
    & \geq & (1-t) \bef{1-t}{K,N}(d(x, \tF_1(x)))^{\frac{1}{N}} + t \bef{t}{K,N}(d(x, \tF_1(x)))^{\frac{1}{N}}J^\psi_1(x)^{\frac{1}{N}}
\end{eqnarray*}
showing the inequality \eqref{eqn2}.
\end{proof}

Along \citep{BacherKathrin2010OBIo}, we can show a version of Borell-Brascamp-Lieb inequality on weighted Riemannian manifolds:
\begin{theorem} \label{BBLonWRM}
    Let $f_0,f_1,h \geq 0$ be functions on a weighted Riemannian manifold $(M,g,m)$ satisfying $\Ric_{m,N} \geq Kg$. Assume that for Borel subsets $A,B \subset M$, $\int_A f_0 dm = \int_B f_1 dm = 1$ and 
    $$h(z)^{-\frac{1}{N}} \leq (1-t) \bef{1-t}{K,N}(d(x,y))^\frac{1}{N}f_0(x)^{-\frac{1}{N}} + t \bef{t}{K,N}(d(x,y))^\frac{1}{N} f_1(x)^{-\frac{1}{N}}$$ 
    for all $(x,y) \in A \times B, z \in Z_t(x,y)$, where 
    $$Z_t(x,y) := \{ z \in M | d(x,z) = td(x,y),d(z,y)=(1-t)d(x,y)\}$$
    is the set of $t$-intermediate points between $x$ and $y$. Then $\int_M h dm \geq 1$.
\end{theorem}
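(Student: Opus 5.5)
The plan is to follow the transport proof of \citep{BacherKathrin2010OBIo}, with Lemma \ref{cvxwtj} supplying the weighted Jacobian estimate. Note that the hypothesis as written has $f_1(x)$; I read it as $f_1(y)$, since that is the only version that makes sense with $(x,y)\in A\times B$.

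\emph{Setting up the transport.} Set $\mu_0 := f_0\mathbf{1}_A\, m$ and $\mu_1 := f_1\mathbf{1}_B\, m$. These are probability measures that are absolutely continuous with respect to $m$. I first assume they have finite second moment, for instance by taking $A,B$ bounded, and remove this restriction at the end by truncation and monotone convergence. By the Brenier--McCann theorem there is a $\frac{d^2}{2}$-convex $f$ such that $\tF_t(x)=\exp_x(t\nabla f(x))$ pushes $\mu_0$ to $\mu_t=\rho_t m$, with $\rho_0=f_0$ on $A$ and $\rho_1=f_1$ on $B$. For $\mu_0$-a.e.\ $x$ the curve $t\mapsto \tF_t(x)$ is a minimizing geodesic from $x$ to $y:=\tF_1(x)\in B$. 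Hence $z:=\tF_t(x)\in Z_t(x,y)$, and the hypothesis can be applied at the triple $(x,y,z)$.

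\emph{Pointwise estimate.} By the Monge--Amp\`ere equation \eqref{MAE}, $\rho_t(\tF_t(x)) = f_0(x)/J^\psi_t(x)$ and $J^\psi_1(x) = f_0(x)/f_1(y)$, both for $\mu_0$-a.e.\ $x$. Substituting into \eqref{eqn2} and dividing by $f_0(x)^{1/N}$ gives
$$\rho_t(\tF_t(x))^{-\frac1N} \ge (1-t)\bef{1-t}{K,N}(d(x,y))^{\frac1N} f_0(x)^{-\frac1N} + t\bef{t}{K,N}(d(x,y))^{\frac1N} f_1(y)^{-\frac1N}.$$
Comparing with the hypothesis yields $h(\tF_t(x))^{-1/N}\le \rho_t(\tF_t(x))^{-1/N}$, that is, $h(\tF_t(x))\ge\rho_t(\tF_t(x))$ for $\mu_0$-a.e.\ $x$.

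\emph{Integration.} Let $S:=\tF_t(\mathrm{supp}\,\mu_0)$, which carries $\mu_t$. Then
$$\int_M h\,dm \ge \int_{S} h\,dm \ge \int_{S}\rho_t\,dm = \mu_t(M)=1.$$
The middle inequality holds because the bound $h\ge\rho_t$ is valid $\mu_t$-a.e.: the $\mu_0$-null set where it might fail pushes forward to a $\mu_t$-null set. It needs the absolute continuity $\mu_t\ll m$, which I take from \eqref{MAE} and the a.e.\ injectivity of $\tF_t$ for $t\in(0,1)$, i.e.\ the standard McCann--Cordero-Erausquin--McCann--Schmuckenschl\"ager theory. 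The endpoints $t=0,1$ are trivial.

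\emph{Main obstacle.} I expect the delicate part to be justifying that \eqref{eqn2} and \eqref{MAE} hold on a set of full $\mu_0$ measure. This requires differentiability of $\tF_t$ almost everywhere (Alexandrov/semiconcavity of $f$) and that $\tF_1(x)$ avoids the cut locus, so that $\tF_t(x)$ really is a $t$-intermediate point. For $K>0$ one also needs $d(x,y)<\pi\sqrt{(N-1)/K}$ for the $\beta$ coefficients to be finite; this follows from the generalized Bonnet--Myers theorem under $\Ric_{m,N}\ge K$. Finally, the case where $f_0$ or $f_1$ vanishes is handled by the convention $0^{-1/N}=\infty$, which makes the hypothesis vacuous there.
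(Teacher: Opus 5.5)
Your proposal is correct and follows the paper's proof. The Monge--Amp\`ere equation \eqref{MAE} combined with the weighted Jacobian concavity \eqref{eqn2} gives $h\circ\tF_t\ge\rho_t\circ\tF_t$ $\mu_0$-a.e., and integrating yields $\int_M h\,dm\ge 1$; the paper integrates via change of variables with $J^\psi_t$, you via $\mu_t(\{h<\rho_t\})=\mu_0(\tF_t^{-1}\{h<\rho_t\})=0$ (argue with preimages rather than images of null sets). Your reading of $f_1(x)$ as $f_1(y)$ is what the paper's proof actually uses, and your remarks on second moments, minimizing geodesics and Bonnet--Myers only make explicit what the paper leaves implicit.
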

The proof of the above theorem is essentially the same as the proof of the main theorem of \citep{BacherKathrin2010OBIo}, which is similar to the proof of the main theorem of \citep{Cordero-ErausquinDario2001ARii} by replacing the volume distortion coefficient by the function $\bef{t}{K,N}$.
\begin{proof}
    By the assumption that $\Ric_{m,N}\geq Kg$, we have the concavity condition of the weighted Jacobian \eqref{eqn2}.

    We let $\rho_0 := f_0|_A$, $\rho_1:=f_1|_B$. There exists an optimal transference plan with density $\rho_t(x)$, satisfying Monge-Amp\`ere equation \eqref{MAE}, from $\mu_0=\rho_0 m$ to $\mu_1=\rho_1 m$ and induced by a map $\tF_t$ such that 
    $$J^\psi_t(x)\cdot \rho_t \circ \tF_t(x) = \rho_0(x) = f_0(x) \text{ for } m\text{-a.e. } x \in A, $$
    and we have, letting $y = \tF_1(x),$
    \begin{eqnarray*}
        && \Big( \frac{f_0(x)}{\rho_t(\tF_t(x))} \Big)^\frac{1}{N} \geq (1-t) \bef{1-t}{K,N}(d(x,y))^\frac{1}{N} + t\bef{t}{K,N}(d(x,y))^\frac{1}{N}\Big( \frac{f_0(x)}{f_1(\tF_1(x))} \Big)^\frac{1}{N}, \\
        &&  \Big( \frac{1}{\rho_t(\tF_t(x))} \Big)^\frac{1}{N} \geq (1-t) \bef{1-t}{K,N}(d(x,y))^\frac{1}{N} \frac{1}{f_0(x)^\frac{1}{N}} + t\bef{t}{K,N}(d(x,y))^\frac{1}{N} \frac{1}{f_1(\tF_1(x))^\frac{1}{N} } \geq h(\tF_t(x))^{-\frac{1}{N}},
    \end{eqnarray*}
    hence 
    $$h \circ \tF_t \geq \rho_t \circ \tF_t.$$

    Thus, we conclude 
    \begin{eqnarray*}
        \int_M h dm &\geq& \int_{\tF_t(A)}h dm \geq \int_M \mathbf{1}_{\tF_t(A)} \rho_t dm\\
        && = \int_A \mathbf{1}_{\tF_t(A)} \circ \tF_t(x) \cdot \rho_t \circ \tF_t(x) J^\psi_t dm = \int_A \mathbf{1}_{\tF_t(A)}(\tF_t(x)) d\mu_0 \\
        && = 1.
    \end{eqnarray*}
\end{proof}

The above theorem can be expressed in terms of the $p$-mean via the same method of scaling as the main theorem of \citep{Cordero-ErausquinDario2001ARii} was generalised to Corollary 1.1 of \citep{Cordero-ErausquinDario2001ARii}. 

\begin{corollary} \label{BBLgen}
    Let $f_0,f_1,h \geq 0$ be functions on a weighted Riemannian manifold $(M,g,m)$ satisfying $\Ric_{m,N} \geq Kg$. Assume that 
    $$ h(z) \geq \mathcal{M}^p_t \Big( \frac{f_0(x)}{\bef{1-t}{K,N}(d(x,y))}, \frac{f_1(y)}{\bef{t}{K,N}(d(x,y))} \Big) \text{ } \forall (x,y) \in M \times M, z \in Z_t(x,y) $$
    then 
    $$ \int_M h dm \geq \mathcal{M}_t^\frac{p}{1+Np} \Big( \int_M f_0 dm, \int_M f_1 dm \Big) $$
    where \begin{eqnarray*}
        \mathcal M^p_t(a,b):= \left\{ \begin{array}{cl}
        ((1-t)a^p + tb^p)^\frac{1}{p} \quad & \mbox{when $ab \neq 0, $}\\
        0 \quad & \mbox{when $ab=0$}\\
    \end{array}
    \right.
    \end{eqnarray*}
    is the $p$-mean, for $p\in \big[-\frac{1}{N},\infty\big]$, as introduced in the introduction.
\end{corollary}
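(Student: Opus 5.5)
Our plan is to derive Corollary \ref{BBLgen} from Theorem \ref{BBLonWRM} by rescaling, following the way Corollary 1.1 of \citep{Cordero-ErausquinDario2001ARii} is obtained from its main theorem. The case $\int f_0\,dm=0$ or $\int f_1\,dm=0$ is trivial, because then the right-hand side is $0$ by the convention for the $p$-mean. So we assume $a:=\int_M f_0\,dm>0$ and $b:=\int_M f_1\,dm>0$. We may also assume both are finite, arguing by truncation or monotone convergence otherwise. Let $A=\{f_0>0\}$ and $B=\{f_1>0\}$, and set $\tilde f_0=f_0/a$, $\tilde f_1=f_1/b$, so that $\int_A\tilde f_0\,dm=\int_B\tilde f_1\,dm=1$. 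We will apply Theorem \ref{BBLonWRM} to $\tilde f_0,\tilde f_1$ and $\tilde h:=h/c$, where $c:=\mathcal M_t^{p/(1+Np)}(a,b)$. The theorem then gives $\int h\,dm\ge c$, which is the claim. In the hypothesis of Theorem \ref{BBLonWRM}, the second term should of course read $f_1(y)$.

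The key step is to verify, for $(x,y)\in A\times B$ and $z\in Z_t(x,y)$, that
\[ \tilde h(z)^{-1/N}\le (1-t)\bef{1-t}{K,N}(d)^{1/N}\tilde f_0(x)^{-1/N}+t\bef{t}{K,N}(d)^{1/N}\tilde f_1(y)^{-1/N}, \]
where $d=d(x,y)$. Write $X=f_0(x)/\bef{1-t}{K,N}(d)$ and $Y=f_1(y)/\bef{t}{K,N}(d)$. Then the right-hand side equals $(1-t)(X/a)^{-1/N}+t(Y/b)^{-1/N}$. The hypothesis gives $h(z)\ge\mathcal M^p_t(X,Y)$, so it suffices to prove the purely real-variable inequality
\[ \mathcal M^p_t(X,Y)\ge c\,\mathcal M^{-1/N}_t\big(X/a,\;Y/b\big). \]
This is exactly the generalized H\"older inequality for means: for $p+q\ge0$ (here $q=-1/N$, $p\ge -1/N$) and $\frac1r=\frac1p+\frac1q$,
\[ \mathcal M^p_t(X,Y)\,\mathcal M^{q}_t(a,b)\ge\mathcal M^{r}_t(Xa, Yb)\quad\text{rearranged appropriately}. \]
More precisely, we use $\mathcal M^{p}_t(u_1,u_2)\,\mathcal M^{q}_t(v_1,v_2)\ge\mathcal M^{r}_t(u_1v_1,u_2v_2)$ with $u=(X/a,Y/b)^{-1}$-type substitutions. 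Concretely, we take $u=(X,Y)$, $v=(a,b)$ and exponent $1/r=1/p+N$ for the reversed form. One checks that $r=p/(1+Np)$ matches the exponent in the conclusion. The cleanest route is to apply H\"older's inequality with conjugate weights directly to $(1-t)(a/X)^{1/N}+t(b/Y)^{1/N}$, bounding it by $\mathcal M_t^{r}(a,b)^{1/N}\,\mathcal M^p_t(X,Y)^{-1/N}$.

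We expect this mean inequality, with correct exponent bookkeeping, to be the main obstacle. That includes the endpoint cases $p=\infty$ (where $r=1/N$), $p=0$ (where $r=0$) and $p=-1/N$ (where $r=-\infty$), together with the convention that a mean vanishes when one argument is $0$. Our plan is to prove the generic case $p\in(-1/N,\infty)\setminus\{0\}$ by H\"older with exponents $1+Np$ and $(1+Np)/(Np)$, and to obtain the endpoints as limits, using monotonicity and continuity of $p\mapsto\mathcal M^p_t$. Points outside $A\times B$ are irrelevant, since Theorem \ref{BBLonWRM} only requires the condition on $A\times B$. Finally, the integral of $\tilde h$ over $M$ dominates what the theorem controls, so $\int_M h\,dm\ge c$.
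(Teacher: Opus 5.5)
Your proposal is correct and follows the route the paper indicates. You normalize $f_0,f_1$ by $a=\int f_0\,dm$ and $b=\int f_1\,dm$, rescale $h$ by $c=\mathcal M_t^{p/(1+Np)}(a,b)$, and verify the hypothesis of Theorem~\ref{BBLonWRM} through the H\"older-type inequality for means, exactly as Corollary~1.1 of Cordero-Erausquin--McCann--Schmuckenschl\"ager is derived from their main theorem. The intermediate phrasing with $q=-1/N$ is garbled: the relevant pairing is $\mathcal M^p_t$ with $\mathcal M^{1/N}_t$, which gives $1/r=1/p+N$. Your final H\"older step with exponents $1+Np$ and $(1+Np)/(Np)$ does, however, give exactly the needed lower bound $(1-t)(a/X)^{1/N}+t(b/Y)^{1/N}\ge \mathcal M^{r}_t(a,b)^{1/N}\,\mathcal M^p_t(X,Y)^{-1/N}$, with the endpoint cases handled by continuity in $p$.
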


\section{Rigidity of the Borell-Brascamp-Lieb inequality}
Suppose that for some $t_0 \in (0,1)$, the Borell-Brascamp-Lieb inequality on a weighted Riemannian manifold as in Theorem \ref{BBLonWRM} attains equality.

We therefore have, with the constructions above, for $\mu_0$-a.e. $x \in A$,
\begin{eqnarray*} 
        \int_M h dm &=& \int_{\tF_{t_0}(A)}h dm = \int_M \mathbf{1}_{\tF_{t_0}(A)} \rho_{t_0} dm\\
        & = & \int_A \mathbf{1}_{\tF_{t_0}(A)} \circ \tF_{t_0}(x) \cdot \rho_{t_0} \circ \tF_{t_0}(x) J^\psi_{t_0} dm = \int_A \mathbf{1}_{\tF_{t_0}(A)}(\tF_{t_0}(x)) d\mu_0 \\
        &=& 1
\end{eqnarray*}
implying that on $A$, $h\circ \tF_{t_0} = \rho_{t_0} \circ \tF_{t_0}$ almost everywhere and hence
\begin{equation} \label{conveql}
    J^\psi_{t_0}(x)^{\frac{1}{N}} = (1-t_0) \bef{1-t_0}{K,N}(d(x, \tF_1(x)))^{\frac{1}{N}} + t_0 \bef{t_0}{K,N}(d(x, \tF_1(x)))^{\frac{1}{N}}J^\psi_1(x)^{\frac{1}{N}}.
\end{equation}

By the comparison of differential inequalities, the interpolation inequality \eqref{conveql} attaining equality at one value of $t$ implies that the inequality attains equality for all $t \in [0,1]$, as follows.

\begin{lemma}
    If the weighted Jacobian $J^\psi_t$ satisfying \eqref{eqn2} attains equality at some $t_0 \in (0,1)$, then equality of the inequality \eqref{eqn2} holds for all $t \in [0,1]$.
\end{lemma}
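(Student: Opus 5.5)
We would retrace the two inequalities in the proof of Lemma \ref{cvxwtj} and show that each must be an equality on all of $[0,1]$ once equality holds at a single $t_0\in(0,1)$. Fix $x$ with $\gamma$ nonconstant (the constant case is trivial, since then $J^\psi_t\equiv 1$) and put $r=d(x,\tF_1(x))$. The chain giving \eqref{eqn2} at time $t$ has two links: first, the comparison for $v:=u^{\frac{1}{N-1}}$,
$$v(t)\geq \frac{\cof{K,N}((1-t)r)}{\cof{K,N}(r)}v(0)+\frac{\cof{K,N}(tr)}{\cof{K,N}(r)}v(1),$$
together with the concavity $f(t)\geq (1-t)f(0)+tf(1)$; second, Hölder's inequality. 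Equality in \eqref{eqn2} at $t_0$ forces equality in every link at $t_0$. In particular, writing $w(t)$ for the right-hand side of the display, the difference $\phi:=v-w$ satisfies $\phi\geq 0$ on $[0,1]$, $\phi(0)=\phi(1)=0$, $\phi(t_0)=0$, and $\phi''+\frac{Kr^2}{N-1}\phi\leq 0$. This last inequality holds because $w$ solves the linear equation $w''+\frac{Kr^2}{N-1}w=0$ exactly.

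The key step is a strong-maximum-principle (Sturm comparison) argument for $\phi$. Since $\phi\geq 0$ attains an interior minimum $0$ at $t_0$, we would show $\phi\equiv 0$. Concretely, let $s:=\cof{K,N}(tr)$, which is positive on $(0,1]$ when $r\sqrt{K/(N-1)}<\pi$ (guaranteed by the diameter bound, or by the standing assumption that $\bef{t}{K,N}$ is finite), and use the Wronskian-type quantity $(\phi' s-\phi s')'=\phi''s-\phi s''\leq 0$. Thus $\phi's-\phi s'$ is nonincreasing. At $t_0$ we have $\phi=\phi'=0$, since $t_0$ is an interior minimum and $v$ is $C^1$ in $t$ along the smooth Jacobi-field evolution. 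Hence $\phi's-\phi s'\geq 0$ on $(0,t_0]$, so $(\phi/s)'\geq 0$ there; as $\phi/s\geq 0$ and vanishes at $t_0$, we get $\phi\equiv 0$ on $(0,t_0]$. On $[t_0,1)$ we run the same argument with $\tilde s:=\cof{K,N}((1-t)r)$. Therefore $v=w$ on all of $[0,1]$.

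The same reasoning applies to $f$, which satisfies $f''\leq 0$: the nonnegative concave-minus-linear function $f(t)-(1-t)f(0)-tf(1)$ vanishes at the interior point $t_0$ and at both endpoints, so by concavity it vanishes identically. For the Hölder step, equality at $t_0$ forces the vectors $\big((1-t_0)f(0),t_0f(1)\big)$ and $\big(\frac{\cof{K,N}((1-t_0)r)}{\cof{K,N}(r)}v(0),\frac{\cof{K,N}(t_0r)}{\cof{K,N}(r)}v(1)\big)$ to be proportional after the appropriate normalisation. Combined with the identities $f(0)=\e$ and $v(0)=\e^{-1/(N-1)}$, this proportionality is a relation involving only the endpoint data and the $\cof{K,N}$ ratios. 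We then check that it is exactly what is needed for Hölder to be an equality at every $t$. We expect this to be the main obstacle, because the proportionality condition a priori depends on $t$ through the coefficients. We would first try to verify the $t$-independence directly: the ratio of the $i$-th terms, $\frac{(1-t)}{\cof{K,N}((1-t)r)}$ against $\frac{t}{\cof{K,N}(tr)}$, changes with $t$, so equality for all $t$ likely requires a relation such as $f(1)v(0)\,\cof{K,N}(r)=f(0)v(1)r$-type normalisations. If this fails in general, we would instead interpret the conclusion as equality holding in the two ODE comparisons for all $t$, which is what the rigidity theorem subsequently uses. With both comparisons and the Hölder step being equalities at every $t$, \eqref{eqn2} holds with equality for all $t\in[0,1]$.
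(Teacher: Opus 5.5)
Your handling of the two differential comparisons is sound and has the same structure as the paper's Claims 1--2. Equality at $t_0$ splits into $f(t_0)=(1-t_0)f(0)+t_0f(1)$ and $v(t_0)=w(t_0)$, and each of these propagates to all of $[0,1]$. Your Wronskian argument with $\cof{K,N}(tr)$ and $\cof{K,N}((1-t)r)$ is more robust than the paper's. The paper argues that $v-w$ is concave via $(v-w)''\le \frac{Kr^2}{N-1}(w-v)\le 0$, so as written it is valid only for $K\ge 0$. You also treat $f$ explicitly, which the paper leaves implicit.

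The gap is the H\"older step, and it cannot be closed. Equality there at time $t$ is the proportionality you describe, which rearranges to
\[
\frac{f(1)\,v(0)}{f(0)\,v(1)}=\frac{(1-t)\,\cof{K,N}(tr)}{t\,\cof{K,N}((1-t)r)}=\frac{\sigma(tr)}{\sigma((1-t)r)},\qquad \sigma(x):=\frac{\cof{K,N}(x)}{x}.
\]
The left side is fixed by the endpoint data. The function $\sigma$ is strictly decreasing for $K>0$ and strictly increasing for $K<0$, so for $K\neq 0$ and $r>0$ the right side is strictly monotone in $t$, and the identity holds at $t_0$ only. Hence \eqref{eqn2} is \emph{strict} at every $t\in(0,1)\setminus\{t_0\}$, even though $f$ is affine and $v=w$. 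Your closing sentence is therefore false rather than merely unproved, and no ``normalisation'' of the endpoint data can rescue it. The literal statement holds only for $K=0$, where the condition becomes $f(1)/v(1)=f(0)/v(0)$ independently of $t$, or for constant $\gamma$.

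The paper's own proof has the same hole: it ends with ``the lemma follows from Claim 2'' and never addresses this step. What it really establishes, and what Theorem \ref{rigthmgen} actually uses, is precisely your fallback: $f$ is affine and $u^{\frac{1}{N-1}}$ solves $(u^{\frac{1}{N-1}})''=-\frac{Kr^2}{N-1}u^{\frac{1}{N-1}}$ on $[0,1]$. You should state that as the conclusion for $K\neq0$ and delete the final sentence.
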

\begin{proof}
    The result is shown by comparison of ODE. We give a proof for $K>0$ and the other situations can be handled in the same way. Recall that with the definition of $f$ and $u$ as in the proof of Lemma \ref{cvxwtj}, the inequality \eqref{eqn2} is derived by:
    \begin{eqnarray*}
    (J^\psi_t)^\frac{1}{N} &=& (fu)^\frac{1}{N} = f^\frac{1}{N}(u^\frac{1}{N-1})^\frac{N-1}{N} \\
    & \geq & ((1-t)f(0) + tf(1))^\frac{1}{N} \Big(\frac{\cof{K,N}((1-t)r)}{\cof{K,N}(r)}u^\frac{1}{N-1}(0) + \frac{\cof{K,N}(tr)}{\cof{K,N}(r)}u^\frac{1}{N-1}(1)\Big)^\frac{N-1}{N} \\
    & \geq & (1-t) \bef{1-t}{K,N}(d(x, \tF_1(x)))^{\frac{1}{N}} + t \bef{t}{K,N}(d(x, \tF_1(x)))^{\frac{1}{N}}J^\psi_1(x)^{\frac{1}{N}}.
\end{eqnarray*}

\textbf{Claim 1}: If equality holds at $t_0$, then $f(t_0) = (1-t_0)f(0) + t_0f(1)$ and $u(t_0)^\frac{1}{N-1} = \frac{\cof{K,N}((1-t_0)r)}{\cof{K,N}(r)}u^\frac{1}{N-1}(0) + \frac{\cof{K,N}(t_0r)}{\cof{K,N}(r)}u^\frac{1}{N-1}(1)$ where $r = d(x,\tF_1(x))$.

\textit{Proof of Claim 1:} Suppose, on the contrary, that the claim is not true, that $$f(t_0) > (1-t_0)f(0) + t_0f(1)$$
then we will have that $u(t_0)^\frac{1}{N-1} < \frac{\cof{K,N}((1-t_0)r)}{\cof{K,N}(r)}u^\frac{1}{N-1}(0) + \frac{\cof{K,N}(t_0r)}{\cof{K,N}(r)}u^\frac{1}{N-1}(1)$, which is a contradiction, as the differential inequality
$$(u^\frac{1}{N-1})'' \leq -\frac{Kr^2}{N-1} u^\frac{1}{N-1}$$
implies otherwise.

Or similarily, if $u(t_0)^\frac{1}{N-1} > \frac{\cof{K,N}((1-t_0)r)}{\cof{K,N}(r)}u^\frac{1}{N-1}(0) + \frac{\cof{K,N}(t_0r)}{\cof{K,N}(r)}u^\frac{1}{N-1}(1)$, we will get the same contradiction for $f$.

\textbf{Claim 2}: If $u$ satisfies $(u^\frac{1}{N-1})'' \leq -\frac{Kr^2}{N-1} u^\frac{1}{N-1}$ for $t \in (0,1)$ and there exists $t_0 \in (0,1)$ such that $u(t_0)^\frac{1}{N-1} = \frac{\cof{K,N}((1-t_0)r)}{\cof{K,N}(r)}u^\frac{1}{N-1}(0) + \frac{\cof{K,N}(t_0r)}{\cof{K,N}(r)}u^\frac{1}{N-1}(1)$, then 
$$u^\frac{1}{N-1}(t) = \frac{\cof{K,N}((1-t)r)}{\cof{K,N}(r)}u^\frac{1}{N-1}(0) + \frac{\cof{K,N}(tr)}{\cof{K,N}(r)}u^\frac{1}{N-1}(1) \quad \forall t \in [0,1].$$

\textit{Proof of Claim 2:} We define $w(t):=\frac{\cof{K,N}((1-t)r)}{\cof{K,N}(r)}u^\frac{1}{N-1}(0) + \frac{\cof{K,N}(tr)}{\cof{K,N}(r)}u^\frac{1}{N-1}(1)$. Note that $$w''(t) = -\frac{Kr^2}{N-1}w.$$
Thus, $u^\frac{1}{N-1}-w$ satisfies the differential inequality 
$$ (u^\frac{1}{N-1}-w)'' \leq \frac{Kr^2}{N-1}(w-u^\frac{1}{N-1}) \leq 0$$
and has boundary values $u^\frac{1}{N-1}(0)-w(0) = u^\frac{1}{N-1}(1)-w(1)=0$. Since also at $t_0$, $u^\frac{1}{N-1}-w=0$, by the maximum principle of subharmonic functions, we conclude $$u^\frac{1}{N-1}-w=0 \quad \forall t \in [0,1].$$

The lemma follows from Claim 2.
\end{proof}

We can thus show the main result by concluding:
\begin{theorem} \label{rigthmgen}
    Rigidity of the Borell-Brascamp-Lieb inequality on weighted Riemannian manifold $(M,g,m)$ satisfying $\Ric_{m,N} \geq Kg$ implies, for a.e. $x \in \mathrm{supp}f_0$, letting $\gamma(t):= \tF_t(x)$, the following hold:
    \begin{enumerate}
        \item For all $t \in(0,1)$, the $N$-Ricci curvature $\Ric_{m,N}(\gamma'(t))$ equals $K g(\gamma',\gamma')$, and the sectional curvatures along the geodesic $\gamma$ are identically $\frac{K}{N-1}$.
        \item The measure behaves as an $(N-n)$-th power of a linear combination of trignometric functions along the geodesic $\gamma(t)$ when $K>0$; as an $(N-n)$-degree polynomial of $t$ along $\gamma(t)$ when $K=0$; and as an $(N-n)$-th power of a linear combination of hyperbolic functions along $\gamma(t)$ when $K<0$.
    \end{enumerate}    
\end{theorem}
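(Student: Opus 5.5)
We first invoke the preceding lemma: equality in the Borell--Brascamp--Lieb inequality yields \eqref{conveql} at $t_0$, so equality in \eqref{eqn2} holds for every $t\in[0,1]$ and $\mu_0$-a.e.\ $x$. By Claim 1 and Claim 2 in that proof, this splits into two equalities for all $t$. The first is $f(t)=(1-t)f(0)+tf(1)$, i.e.\ $f$ is affine. The second is $u^{\frac{1}{N-1}}=w$, where $w$ solves $w''=-\frac{Kr^2}{N-1}w$. The plan is to propagate these equalities back through every inequality used in the proof of Lemma \ref{cvxwtj}. Each of those inequalities must then be an equality for all $t\in(0,1)$.

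\textbf{Curvature.} Since $u^{\frac{1}{N-1}}=w$, we get $(u^{\frac{1}{N-1}})''=-\frac{Kr^2}{N-1}u^{\frac{1}{N-1}}$. On the other hand, $(u^{\frac{1}{N-1}})''=\frac{1}{N-1}u^{\frac{1}{N-1}}\big(\alpha''+\frac{(\alpha')^2}{N-1}\big)$. Hence equality holds in \eqref{keyCSineqapp} and in $\Ric_{m,N}(\gamma')\ge Kr^2=Kg(\gamma',\gamma')$. This gives $\Ric_{m,N}(\gamma'(t))=Kg(\gamma',\gamma')$.

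Equality in the chain leading to \eqref{keyCSineqapp} also forces three things:
\begin{enumerate}
\item $\U_{ij}=0$ for $i\ne j$, $i,j\ge2$, and also $\U_{1j}=0$ for $j\ge 2$, by comparing $\sum_j\U_{1j}^2$ with $\U_{11}^2$, using $f''=0$ from the affine $f$;
\item $\U_{22}=\dots=\U_{nn}=:\eta(t)$, from Cauchy--Schwarz;
\item the final Cauchy--Schwarz step $\frac{a^2}{n-1}+\frac{b^2}{N-n}\ge\frac{(a+b)^2}{N-1}$ is an equality, with $a=y'-\lambda'=(n-1)\eta$ and $b=-(\psi\circ\gamma)'$. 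Hence $\frac{-(\psi\circ\gamma)'}{N-n}=\eta$.
\end{enumerate}
Now $\U$ restricted to $\gamma'^\perp$ is the scalar $\eta\,\mathrm{Id}$, and the Riccati equation $\U'+\U^2+R(\cdot,\gamma')\gamma'=0$ then reads $R(e_t^i,\gamma')\gamma'=-(\eta'+\eta^2)e_t^i$ for $i\ge2$. So all sectional curvatures $\sec(e^i_t,\gamma')$ equal a common value $\kappa(t)$, and the Ricci curvature is $\Ric(\gamma')=(n-1)\kappa r^2$.

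\textbf{Identifying $\kappa$.} This is the step I expect to be the main obstacle. I would express everything in terms of $\eta$. The Riccati relation gives $\eta'+\eta^2=-\kappa r^2$. In addition, $\alpha'=(N-1)\eta$, and the equality $\alpha''=-Kr^2-\frac{(\alpha')^2}{N-1}$ gives $\eta'+\eta^2=-\frac{Kr^2}{N-1}$. Comparing the two relations yields $\kappa=\frac{K}{N-1}$, proving item 1. The care needed is that $\Ric(\gamma')$ and $\mathrm{Hess}_\psi$ are only constrained in combination. So I would check the identity through $\alpha=y^\psi-\lambda$ directly, rather than by separating the terms.

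\textbf{The measure.} Set $\phi(t):=e^{-\psi(\gamma(t))/(N-n)}$. From $(\psi\circ\gamma)'=-(N-n)\eta$ we get $\phi'/\phi=\eta$. Then $\phi''/\phi=\eta'+\eta^2=-\frac{Kr^2}{N-1}$, so $\phi''=-\frac{Kr^2}{N-1}\phi$. Hence $\phi$ is a linear combination of $\sin,\cos$ of $t r\sqrt{K/(N-1)}$ when $K>0$, an affine function of $t$ when $K=0$, and a combination of $\sinh,\cosh$ when $K<0$. Consequently the density of $m$, namely $e^{-\psi(\gamma(t))}=\phi^{N-n}$, is the $(N-n)$-th power of such a function, which is item 2. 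Equivalently, one could read this off from $u^{\frac{1}{N-1}}=w$ together with $J_t=e^{\lambda_t}\cdot\phi^{n-1}$-type factorizations, but the ODE for $\phi$ is the cleanest route. The case $N=n$ is degenerate: there $\psi$ is constant and item 2 is vacuous.
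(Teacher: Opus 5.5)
Your proposal is correct and follows essentially the same route as the paper. You propagate the equalities ($f$ affine and $u^{1/(N-1)}=w$) back through the chain \eqref{keyCSineqapp}, read off $\Ric_{m,N}(\gamma')=Kg(\gamma',\gamma')$, $\V=\eta\,\mathrm{Id}$ and the Cauchy--Schwarz proportionality $-(\psi\circ\gamma)'=(N-n)\eta$, and combine $\alpha'=(N-1)\eta$ with the Riccati equation to get $\eta'+\eta^2=-\frac{Kr^2}{N-1}$, hence sectional curvature $\frac{K}{N-1}$. The only cosmetic difference is in the measure step: you obtain $\phi''=-\frac{Kr^2}{N-1}\phi$ directly from $\phi'/\phi=\eta$, while the paper first substitutes $\Ric(\gamma')=\frac{n-1}{N-1}Kr^2$ into the definition of $\Ric_{m,N}$ to get an ODE for $\psi\circ\gamma$; both lead to the same equation.
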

\begin{proof}
    We assume $N>n$ in the proof, as the case of $N=n$ reduces to \citep{BaloghZoltanM.2018EiBi}. With the above lemma and the construction of the functions $\alpha, y, \lambda_t, u, f$ and the definition of the matrix $U$ as in the proof of Lemma \ref{cvxwtj}, we have for all $t \in (0,1)$, 
    \begin{eqnarray*}
    (J^\psi_t)^\frac{1}{N} &=& (fu)^\frac{1}{N} = f^\frac{1}{N}(u^\frac{1}{N-1})^\frac{N-1}{N} \\
    & = & ((1-t)f(0) + tf(1))^\frac{1}{N} \Big(\frac{\cof{K,N}((1-t)r)}{\cof{K,N}(r)}u^\frac{1}{N-1}(0) + \frac{\cof{K,N}(tr)}{\cof{K,N}(r)}u^\frac{1}{N-1}(1)\Big)^\frac{N-1}{N} \\
    & = & (1-t) \bef{1-t}{K,N}(d(x, \tF_1(x)))^{\frac{1}{N}} + t \bef{t}{K,N}(d(x, \tF_1(x)))^{\frac{1}{N}}J^\psi_1(x)^{\frac{1}{N}}
\end{eqnarray*}
which implies that $f(t)= (1-t)f(0) + tf(1)$ and the function $u$ satisfies
    \begin{eqnarray*}
    (u^\frac{1}{N-1})'' &=& -\frac{Kr^2}{N-1} u^\frac{1}{N-1}.
    \end{eqnarray*}

    Since we have the chain of differential inequality
    $$(u^\frac{1}{N-1}) '' \leq - \frac{\Ric_{m,N}(\gamma')}{N-1} u^\frac{1}{N-1} \leq -\frac{Kr^2}{N-1} u^\frac{1}{N-1},$$
    by squeezing, it can be concluded that along the geodesic $\gamma$, $\Ric_{m,N}(\gamma')=Kg(\gamma',\gamma')$.

The differential inequality of $u$ attaining equality also implies that the differential inequality \eqref{keyCSineqapp} attains equality:
\begin{equation} \label{alphaequality}
\alpha'' = - \Ric_{m,N}(\gamma') - \frac{(\alpha')^2}{N-1}.
\end{equation}

The equality being attained has two implications. The first implication is that the matrix $V$, defined to be $\V:=(\U_{ij})_{i,j=2,...,n}$, satisfies the condition $\mathrm{tr}(\V^2) = \frac{1}{n-1} (\mathrm{tr}\V)^2$, that it equals the scalar multiple of the identity matrix $\V = \xi(t) \mathbf{\mathrm{Id}}_{n-1}$, which further implies that the sectional curvatures along the geodesic $\gamma(t)$ are all identical by the Riccati equation.

The second implication is that the Cauchy-Schwarz inequality between the tuples $$\Big(-\frac{\partial_{\gamma'}\psi}{\sqrt{N-n}},\frac{y'-\lambda'}{\sqrt{n-1}}\Big) \text{ and }(\sqrt{N-n},\sqrt{n-1})$$ attains equality, which holds only if
$$\frac{-\partial_{\gamma'}\psi}{y'-\lambda'} = \frac{N-n}{n-1}.$$

Thus, having $y'-\lambda' = \mathrm{tr} \V = (n-1) \xi(t)$, we have $-\partial_{\gamma'} \psi = (N-n) \xi(t)$, and $\alpha' = y'-\lambda'-\partial_{\gamma'}\psi = (N-1) \xi(t)$. Hence equation \eqref{alphaequality} gives, when $\Ric_{m,N} = Kg$,
$$\xi'(t) + \xi^2(t) = -\frac{Kr^2}{N-1}\text{, where } r = d(x,\tF_1(x))$$
and the result on the sectional curvatures thus follows from the Riccati equation.

To derive information on the behaviour of measure along $\gamma(t)$, we recall the expression of the $N$-Ricci curvature $$\Ric_{m,N} = \Ric(\gamma') + \mathrm{Hess}_{\psi}(\gamma',\gamma') - \frac{(\partial_{\gamma'}\psi)^2}{N-n}.$$
Given $\Ric_{m,N}(\gamma') = Kr^2$ and $\Ric(\gamma') = \frac{n-1}{N-1}Kr^2$, we can derive the following second order differential equation of $\psi$:
$$\frac{\mathrm{d}^2}{\mathrm{d}t^2}\psi \circ \gamma (t) - \frac{1}{N-n} \Big[\frac{\mathrm{d}}{\mathrm{d}t}(\psi \circ \gamma (t))\Big]^2 = \frac{N-n}{N-1}Kr^2.$$

Therefore, it can be deduced that $\e^{-\psi \circ \gamma(t)}$ satisfies the differential equation
$$\frac{\mathrm{d}^2}{\mathrm{d} t^2} \e^\frac{-\psi \circ \gamma(t))}{N-n} = -\frac{Kr^2}{N-1} \e^\frac{-\psi \circ \gamma(t)}{N-n}.$$
The solution of the differential equation yields
\begin{eqnarray*}
    \e^{-\psi \circ \tF_t(x)} = \left\{ \begin{array}{cl}
        \Big[ C_0\sin\Big( \sqrt{\frac{K}{N-1}}rt\Big) + C_1 \cos \Big(\sqrt{\frac{K}{N-1}}rt\Big) \Big]^{N-n} \quad & \mbox{when $K>0$,}\\
        (C_0t+C_1)^{N-n} \quad & \mbox{when $K=0$,}\\
        \Big[ C_0 \sinh\Big(\sqrt{\frac{-K}{N-1}}rt\Big) + C_1 \cosh\Big(\sqrt{\frac{-K}{N-1}}rt\Big) \Big]^{N-n} \quad & \mbox{when $K<0$}
    \end{array}
    \right.
\end{eqnarray*}
where $C_0$ and $C_1$ are constants determined by the initial values $\e^{-\psi(x)}$ and $\partial_{\gamma'(0)}\e^{-\psi(x)}$
\begin{eqnarray*}
    C_0 = \frac{-\partial_{\gamma'(0)}\psi(x)}{N-n}\e^\frac{-\psi(x)}{N-n}\text{, }C_1 = \e^\frac{-\psi(x)}{N-n}.
\end{eqnarray*}
\end{proof}

\begin{remark} 
When the dimension $n$ of $M$ is less than $N$, the density function $e^{-\psi \circ \tF_t(x)}$ takes the value of $0$ at $t=-\frac{1}{r}\sqrt{\frac{N-1}{K}}\arctan\big( \frac{C_1}{C_0} \big)$ when $K>0$; at $t=-\frac{C_1}{C_0}$ when $K=0$; or at $t = -\frac{1}{r} \sqrt{\frac{N-1}{-K}}\mathrm{arctanh}\big(\frac{C_1}{C_0}\big)$ when $K<0$ and $|C_1|<|C_0|$.

Nonetheless, this does not lead to a contradiction. On a smooth weighted Riemannian manifold $M$, the initial value $e^{-\psi(x)}$ is nonzero and the point at which the density $e^{-\psi}$ vanishes is not on the geodesic $\tF_t(x)$ for all $t \in [0,1]$ for a.e. $x \in \mathrm{supp}f_0$, hence outside the support of $f_0,f_1$ and $h$ and can be resolved by mollification. Therefore it is possible to attain equality of the Borell-Brascamp-Lieb inequality on smooth weighted Riemannian manifolds. (See also Remark \ref{BMRmk}.)
\end{remark}

For completeness, we include also a rigidity result on unweighted Riemannian manifolds, which is a direct consequence of Theorem \ref{rigthmgen} and can be considered as an addition of the consideration of dimension bounds to the curvature rigidity result in Theorem 4.1 of \citep{BaloghZoltanM.2018EiBi}.

\begin{corollary}
    On a Riemannian manifold $(M,g,\mathrm{vol}_g)$ satisfying $\Ric_{N}\geq Kg$, rigidity of the Borell-Brascamp-Lieb inequality implies that the dimension $n$ of $M$ coincides with $N$ and the sectional curvatures along the geodesic $\gamma(t)$ are identically $\frac{K}{N-1}$. 
\end{corollary}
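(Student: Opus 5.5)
Apply Theorem \ref{rigthmgen} in the unweighted situation, i.e.\ with $\psi \equiv \mathrm{const}$, so that $m = \e^{-\psi}\mathrm{vol}_g$ is a constant multiple of $\mathrm{vol}_g$. Here $\Ric_N$ means $\Ric_{m,N}$ with constant $\psi$, which equals $\Ric$ for every $N \in [n,\infty)$ because $\partial_v\psi = 0$ and $\mathrm{Hess}_\psi = 0$.

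\textbf{Case $N = n$.} The statement is just Theorem 4.1 of \citep{BaloghZoltanM.2018EiBi}. It also follows from the first part of Theorem \ref{rigthmgen}, which gives sectional curvatures $\frac{K}{n-1}$ along $\gamma$.

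\textbf{Case $N > n$.} Assume, for contradiction, that equality holds in the Borell-Brascamp-Lieb inequality. I then follow the proof of Theorem \ref{rigthmgen} for $\mu_0$-a.e.\ $x \in \mathrm{supp} f_0$ with $\gamma$ nonconstant. Note that if all transport geodesics were constant, then $\mu_0 = \mu_1$, a degenerate case that is excluded or treated separately.
\begin{enumerate}
    \item Equality in the Cauchy-Schwarz step of \eqref{keyCSineqapp} forces $-\partial_{\gamma'}\psi = (N-n)\xi(t)$. Since $\psi$ is constant, $\xi \equiv 0$.
    \item The Riccati-type equation then reads $\xi' + \xi^2 = -\frac{Kr^2}{N-1}$, which forces $K r^2 = 0$. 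Hence $K = 0$ whenever $r > 0$.
    \item Alternatively, the second part of Theorem \ref{rigthmgen} gives that $\e^{-\psi/(N-n)}$, here a constant, solves $y'' = -\frac{Kr^2}{N-1}\, y$ with $y \neq 0$. Again $K = 0$.
\end{enumerate}
So when $N > n$ rigidity can only occur if $K = 0$, and then the sectional curvatures along $\gamma$ are $0 = \frac{K}{N-1}$.

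\textbf{Main obstacle.} The corollary claims $n = N$ outright, so the case $K = 0$, $N > n$ must also be excluded. In that case $\Ric \geq 0$ implies $\Ric_N \geq 0$ for every $N \geq n$. The sharp Jacobian interpolation with exponent $\frac{1}{n}$ then holds, and it is strictly stronger than the one with $\frac{1}{N}$ unless $J^\psi_1 = 1$ and $\gamma$ is a pure translation with $\U \equiv 0$.

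I would argue as follows. Equality with exponent $\frac{1}{N}$ while the $\frac{1}{n}$ inequality also holds forces the power means with exponents $\frac{1}{n}$ and $\frac{1}{N}$ to agree. This requires $J_1 = 1$ and $u$ to be constant, so $\mathrm{tr}\,\V = 0$ along $\gamma$. Combined with $\V = \xi\,\mathrm{Id}$ and $\xi = 0$ from step 1, every transport geodesic is then a Jacobian-preserving translation. This contradicts the equality case of the scaled inequality in Corollary \ref{BBLgen} whenever the masses or the supports differ nontrivially.

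I expect this last step to be the delicate one. It may be necessary to interpret ``rigidity'' as equality for nondegenerate data, or to read the dimension claim as $N = n$ unless $K = 0$ and the transport is trivial. I would state this caveat explicitly and derive the dimension conclusion from steps 1--2 in the genuinely curved case.
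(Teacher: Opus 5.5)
For $K\neq 0$, your steps 1--3 are exactly the paper's argument. The paper simply calls the corollary a direct consequence of Theorem \ref{rigthmgen}: with $\psi$ constant one has $C_0=0$, so the second part of that theorem gives $\e^{-\psi\circ\gamma(t)}=C_1^{N-n}\cos^{N-n}\big(\sqrt{K/(N-1)}\,rt\big)$ (or $\cosh^{N-n}$ when $K<0$). This is non-constant when $N>n$ and $r>0$. Excluding the identity transport, as you do, is a tacit hypothesis of the paper as well.

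The genuine gap is your treatment of $K=0$, $N>n$. The sentence ``this contradicts the equality case \dots\ whenever the masses or the supports differ nontrivially'' is false as far as supports go. Moreover, no argument can close this case, because the claim $n=N$ fails there.

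Take $M=\reals^n$ with $\psi\equiv 0$, any $N>n$ and $K=0$. Let $A$ be a convex body with Lebesgue measure $|A|$, and set $B=A+v$ with $v\neq 0$. Put $f_0=\mathbf{1}_A/|A|$, $f_1=\mathbf{1}_B/|A|$ and $h=\mathbf{1}_{A+tv}/|A|$. For $(x,y)\in A\times B$ we have $Z_t(x,y)=\{(1-t)x+ty\}\subset A+tv$, and $\bef{t}{0,N}\equiv 1$. Hence the hypothesis of Theorem \ref{BBLonWRM} holds, in fact with equality, and $\int h\,dm=1$.

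The optimal transport is $\tF_t(x)=x+tv$. It has nonconstant geodesics, $J_t\equiv 1$ and $\U\equiv 0$. These are precisely the ``Jacobian-preserving translations'' you isolated, and differing supports do not exclude them. Equivalently, translates of a convex body give equality in \eqref{BMFlat} with exponent $\frac{1}{N}$.

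The paper's route also yields nothing here: for $K=0$ its formula reduces to the constant $C_1^{N-n}$, which is consistent with constant $\psi$. So the corollary is only correct for $K\neq 0$.

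You should therefore replace your ``main obstacle'' paragraph by this counterexample and prove the corrected statement:
\begin{itemize}
\item for $K\neq 0$ and non-trivial transport, rigidity forces $n=N$ and sectional curvatures $\frac{K}{N-1}$ along $\gamma$;
\item for $K=0$, rigidity forces, along a.e.\ transport geodesic, vanishing sectional curvatures of planes containing $\gamma'$, $J_t\equiv 1$ and $\U\equiv 0$, but $N>n$ remains possible.
\end{itemize}
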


\section{The Brunn-Minkowski Inequality and its Rigidity}
The Brunn-Minkowski inequality on Riemannian manifolds with $\Ric_{m,N} \geq Kg$, established in \citep{STURMKarl-Theodor2006Otgo} is closely related to the Borell-Brascamp-Lieb inequality, whose rigidity can also be considered as a consequence of the rigidity of the Borell-Brascamp-Lieb inequality.

For Borel subsets $A,B \subset M$ of positive and finite measure, the Brunn-Minkowski inequality is a consequence of the Borell-Brascamp-Lieb inequality, by an appropriate selection of functions $f_0,f_1,h$ in the Borell-Brascamp-Lieb inequality (in the form of Corollary \ref{BBLgen}). 

We pick $f_0:=\bef{1-t}{K,N}(\Theta_{A,B})\mathbf{1}_A$, $f_1:= \bef{t}{K,N}(\Theta_{A,B}) \mathbf{1}_B$ and $h:= \mathbf{1}_{Z_t(A,B)}$ where $\Theta_{A,B} := \inf_{(x,y) \in A\times B} d(x,y)$ when $K \geq 0$ and $\Theta_{A,B} := \sup_{(x,y) \in A\times B} d(x,y)$ when $K < 0$. It is easy to see that the condition $ h(z) \geq \mathcal{M}^p_t \Big( \frac{f_0(x)}{\bef{1-t}{K,N}(d(x,y))}, \frac{f_1(y)}{\bef{t}{K,N}(d(x,y))} \Big)$ for $z \in Z_t(x,y)$ is satisfied. Letting $p = + \infty$, the Borell-Brascamp-Lieb inequality implies the Brunn-Minkowski inequality:
\begin{multline*}
 m(Z_t(A,B)) = \int_M h dm \geq \mathcal{M}_t^\frac{1}{N} \Big( \int_M f_0 dm, \int_M f_1 dm \Big) \\
  = \Big( (1-t)\bef{1-t}{K,N}(\Theta_{A,B})^\frac{1}{N}m(A)^\frac{1}{N} + t \bef{t}{K,N}(\Theta_{A,B})^\frac{1}{N}m(B)^\frac{1}{N} \Big)^N.
\end{multline*}

\begin{remark}
    The fact that on weighted Riemannian manifolds satisfying $\Ric_{m,N}\geq Kg$ the Borell-Brascamp-Lieb inequality implies Brunn-Minkowski inequality shows the equivalence between Borell-Brascamp-Lieb inequality and $\Ric_{m,N}\geq Kg$. `$\Ric_{m,N}\geq Kg \Rightarrow$ BBL' is known, and for the inverse implication, we have that the Borell-Brascamp-Lieb inequality implies Brunn-Minkowski inequality, which by \citep{MagnaboscoMattia2024TBii} implies the $\mathrm{CD}(K,N)$ condition, equivalent to $\Ric_{m,N} \geq Kg$.
\end{remark}

\subsection{Rigidity in the null curvature bound}
In the case of null-curvature bound, the Brunn-Minkowski inequality takes the form 
\begin{equation} \label{BMFlat}
m(Z_t(A,B))^\frac{1}{N} \geq (1-t) m(A)^\frac{1}{N} + tm(B)^\frac{1}{N}.
\end{equation}

Assuming that the sets $A,B$ differ from convex sets up to a set of measure $0$ and that $0<m(A),m(B)< \infty$, the result below is a rather straightforward consequence of the rigidity of the Borell-Brascamp-Lieb inequality in $\CD(0,N)$ manifolds. 

\begin{theorem}
    Rigidity of the Brunn-Minkowski inequality on weighted Riemannian manifolds such that $\Ric_{m,N} \geq 0$ implies that for a.e. $x \in A$, the sectional curvatures along the geodesic of transport from $A$ to $B$ starting at $x$ are identically $0$, and the weight behaves as a polynomial of degree $N-n$ along the paths of transportation.
\end{theorem}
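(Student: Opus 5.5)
The plan is to reduce the statement to Theorem \ref{rigthmgen} with $K=0$. With $K=0$ we have $\bef{t}{0,N}\equiv 1$, so the choice made above reads $f_0:=\mathbf{1}_A$, $f_1:=\mathbf{1}_B$, $h:=\mathbf{1}_{Z_t(A,B)}$, and Corollary \ref{BBLgen} with $p=+\infty$ yields \eqref{BMFlat}. Since Theorem \ref{BBLonWRM} is normalised to unit mass, I will rescale and work with probability densities $\rho_0:=\mathbf{1}_A/m(A)$ and $\rho_1:=\mathbf{1}_B/m(B)$. Let $\tF_t$ be the optimal transport map from $\rho_0m$ to $\rho_1m$.

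Assume equality holds in \eqref{BMFlat} at some $t_0\in(0,1)$. I then want to run the chain from the proof of Theorem \ref{BBLonWRM} with these densities, taking $h$ to be a suitable multiple of $\mathbf{1}_{Z_{t_0}(A,B)}$. From \eqref{MAE} and \eqref{eqn2} I get
\begin{equation*}
m(Z_{t_0}(A,B)) \geq \int_A J^\psi_{t_0}\,dm \geq \int_A \Big((1-t_0)+t_0 J^\psi_1(x)^{\frac1N}\Big)^N dm .
\end{equation*}
By Minkowski's inequality for the $L^{N}$-type functional, the right-hand side is at least $\big((1-t_0)m(A)^{1/N}+t_0m(B)^{1/N}\big)^N$, using $\int_A J^\psi_1\,dm=m(B)$. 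Equality in \eqref{BMFlat} therefore forces every step to be an equality. In particular, the pointwise inequality \eqref{eqn2} becomes the equality \eqref{conveql} for $\mu_0$-a.e. $x\in A$. The convexity of $A,B$ up to null sets is used here to make sure that $\tF_{t_0}(A)$ fills $Z_{t_0}(A,B)$ up to a null set, so the first inequality is also an equality.

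Once \eqref{conveql} holds a.e., the preceding Lemma upgrades it to equality for all $t\in[0,1]$. From there the proof of Theorem \ref{rigthmgen} applies verbatim with $K=0$. It gives $\Ric_{m,N}(\gamma')=0$, $\V=\xi\,\mathrm{Id}_{n-1}$ with $\xi'+\xi^2=0$, and vanishing sectional curvatures along $\gamma(t)=\tF_t(x)$. It also gives that $\e^{-\psi\circ\gamma/(N-n)}$ has zero second derivative, so $\e^{-\psi\circ\gamma(t)}=(C_0t+C_1)^{N-n}$, a polynomial of degree $N-n$ in $t$. The case $N=n$ is left to \citep{BaloghZoltanM.2018EiBi}, as in Theorem \ref{rigthmgen}.

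I expect the main obstacle to be the middle step: turning equality of the integrated Brunn--Minkowski quantities into the pointwise equality \eqref{conveql} almost everywhere. This needs two things. First, the Minkowski/Hölder step must be sharp; otherwise equality there only forces $J^\psi_1$ to be constant, which is consistent but must be checked. Second, one has to rule out that $Z_{t_0}(A,B)\setminus\tF_{t_0}(A)$ has positive measure. I will try to handle the second point with the convexity hypothesis, and failing that by working directly with the equality $m(Z_{t_0}(A,B))=m(\tF_{t_0}(A))$, which the chain of inequalities already forces.
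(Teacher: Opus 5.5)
Your route is the paper's own: specialise to $K=0$, use the uniform densities $\mathbf{1}_A/m(A)$ and $\mathbf{1}_B/m(B)$, and feed the equality into the analysis behind Theorem \ref{rigthmgen}. The paper states this reduction in one line; your chain is the equality argument at the start of Section 3 made explicit. However, your middle step rests on an inequality that goes the wrong way. Minkowski's inequality in $L^N(A)$ gives
\[
\Big(\int_A\big((1-t_0)+t_0J^\psi_1(x)^{\frac1N}\big)^N\,dm\Big)^{\frac1N}\le(1-t_0)\,m(A)^{\frac1N}+t_0\Big(\int_AJ^\psi_1\,dm\Big)^{\frac1N},
\]
which is an upper bound. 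Equivalently, $s\mapsto((1-t_0)+t_0s^{1/N})^N$ is concave and Jensen applies, with strict inequality whenever $J^\psi_1$ is non-constant. For example, take $N=2$, $t_0=\tfrac12$, $m(A)=1$, and $J^\psi_1$ equal to $1$ and $9$ on two halves. The integral is then $\tfrac52$, below your claimed lower bound $\big(\tfrac{1+\sqrt5}{2}\big)^2\approx 2.62$. So, as written, the chain does not close.

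The fix is that no inequality is needed here. With uniform densities, \eqref{MAE} at $t=1$ gives $J^\psi_1(x)=\rho_0(x)/\rho_1(\tF_1(x))=m(B)/m(A)$ for $\mu_0$-a.e. $x\in A$. Hence the integral equals $\big((1-t_0)m(A)^{1/N}+t_0m(B)^{1/N}\big)^N$ exactly. Equality in \eqref{BMFlat} then forces $\int_A\big[J^\psi_{t_0}-\big((1-t_0)+t_0(J^\psi_1)^{1/N}\big)^N\big]dm=0$. The integrand is nonnegative by \eqref{eqn2}, so \eqref{conveql} holds $\mu_0$-a.e. This is exactly the identity $h\circ\tF_{t_0}=\rho_{t_0}\circ\tF_{t_0}$ from Section 3, with $h$ the normalised indicator of $Z_{t_0}(A,B)$.

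Separately, convexity does not make $\tF_{t_0}(A)$ fill $Z_{t_0}(A,B)$. In $\reals^2$, take $A$ the unit disc and $B$ an ellipse with semi-axes $a\neq b$. The optimal map is $x\mapsto(ax_1,bx_2)$, so $\tF_t(A)$ is an ellipse, and $m(Z_t(A,B))-m(\tF_t(A))=t(1-t)\big(\mathrm{per}(B)-\pi(a+b)\big)>0$. You do not need this claim anyway. Once the two ends of the chain agree, every intermediate inequality is forced to be an equality, as your fallback remark says, and the pointwise conclusion only uses the second inequality. With these two corrections the rest of your argument goes through as in the paper: the Lemma extends equality to all $t$, and the $K=0$ case of Theorem \ref{rigthmgen} gives flat sectional curvatures and $\e^{-\psi\circ\gamma(t)}=(C_0t+C_1)^{N-n}$.
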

\begin{proof}
    Equality in the Brunn-Minkowski inequality reads
    $$m(Z_t(A,B))^\frac{1}{N} = (1-t)m(A)^\frac{1}{N} + tm(B)^\frac{1}{N}.$$    
    Picking the functions to be the uniform distribution functions with supports being the respective sets, the result follows from the rigidity of the Borell-Brascamp-Lieb inequality, that for $m$-a.e. $x \in A$, along the geodesic $t \mapsto \tF_t(x)$, the sectional curvatures are constantly $0$ and the measure along it behaves as
    $$\e^{-\psi \circ \tF_t(x)} = (C_0t+C_1)^{N-n}$$
    as in Theorem \ref{rigthmgen}.
\end{proof}
\begin{remark} \label{BMRmk}
    When $A$ is a point $\{x\}$ and $B$ is a ball centred at $x$, rigidity of the Brunn-Minkowski inequality would imply equality in the Bishop-Gromov volume comparison, which further implies that $B$ is a part of a flat cone due to the main result of \citep{DePhilippisGuido2016Fvct}, with a singularity at $\{x\}$ if $M$ is complete, unless the dimension $n$ of $M$ is $N$. Whilst it may be difficult to construct nontrivial examples of sets that attain equality of the Brunn-Minkowski inequality on complete smooth Riemannian manifolds with dimension less than $N$, it is possible to attain equality in the Brunn-Minkowski inequality on geodesically convex smooth open manifolds. 
\end{remark}

\begin{examp}
    Let $M$ be the Euclidean half space $\reals^n_+:=\{x=(x^1,...,x^n) \in \reals^n | x^n>0\}$ endowed with the measure $m = \|x\|^{N-n} \mathcal L^n$. Let $A=B_r((0,...,1))$ be a ball of radius $r<1$ centred at $(0,...,1)$ and $B = 2A = B_{2r}((0,...,2))$. We have $Z_t(A,B) = (1+t)A$, and 
    \begin{eqnarray*}
        m(Z_t(A,B)) &=& \int_{Z_t(A,B)} dm = \int_{(1+t)A} \|x\|^{N-n} d\mathcal L^n(x)= \int_A (1+t)^n \|(1+t)x\|^{N-n} d\mathcal L^n(x) \\
        &=& (1+t)^N m(A) = \Big( (1-t)m(A)^{\frac{1}{N}} + tm(B)^{\frac{1}{N}} \Big)^N.
    \end{eqnarray*}
\end{examp}

\subsection{Rigidity in the positive and negative curvature bounds}
The Brunn-Minkowski inequality takes the form 
$$m(Z_t(A,B))^\frac{1}{N} \geq (1-t)\bef{1-t}{K,N}\Big(\inf_{(x,y) \in A\times B} d(x,y)\Big)^\frac{1}{N}m(A)^\frac{1}{N} + t \bef{t}{K,N}\Big(\inf_{(x,y) \in A\times B} d(x,y)\Big)^\frac{1}{N}m(B)^\frac{1}{N}$$
when $K>0$, and
$$m(Z_t(A,B))^\frac{1}{N} \geq (1-t)\bef{1-t}{K,N}\Big(\sup_{(x,y) \in A\times B} d(x,y)\Big)^\frac{1}{N}m(A)^\frac{1}{N} + t \bef{t}{K,N}\Big(\sup_{(x,y) \in A\times B} d(x,y)\Big)^\frac{1}{N}m(B)^\frac{1}{N}$$
when $K<0$.

By the scaling property of the $p$-mean, similar to the one used in the Theorem 4.1(ii) and Theorem 4.2 of \citep{BaloghZoltanM.2018EiBi}, we have:
\begin{equation} \label{befeq}
    1 = \frac{\bef{1-t}{K,N}(\Theta_{A,B})}{\bef{1-t}{K,N}(d(x,\tF_1(x)))} = \frac{\bef{t}{K,N}(\Theta_{A,B})}{\bef{t}{K,N}(d(x,\tF_1(x)))}
\end{equation}
and due to the monotonicity of the function $\bef{t}{K,N}(r)$ in $r$, $d(x,\tF_1(x)) = \Theta_{A,B}$ for a.e. $ x \in A$. Thus, by the same proof of Theorem 4.2 of \citep{BaloghZoltanM.2018EiBi}, we conclude the following result on rigidity.

\begin{theorem}
    On a weighted Riemannian manifold with $\Ric_{m,N} \geq Kg$ where $K>0$, if compact convex sets $A,B \subset M$ with finite and positive measure satisfies equality of the Brunn-Minkowski inequality for some $t_0 \in (0,1)$, then $A = B = Z_t(A,B)$ for all $t\in [0,1]$ up to a set of null measure. Equality of the Brunn-Minkowski inequality cannot hold for compact convex sets of finite and positive measure on weighted Riemannian manifolds satisfying $\Ric_{m,N} \geq Kg$ when $K<0$.
\end{theorem}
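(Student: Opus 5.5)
We follow the strategy of Theorem 4.2 of \citep{BaloghZoltanM.2018EiBi} and reduce everything to the rigidity of the Borell-Brascamp-Lieb inequality. First we realise the Brunn-Minkowski inequality as in the beginning of this section. We take $f_0=\bef{1-t}{K,N}(\Theta_{A,B})\mathbf{1}_A$, $f_1=\bef{t}{K,N}(\Theta_{A,B})\mathbf{1}_B$, $h=\mathbf{1}_{Z_t(A,B)}$ and $p=+\infty$, and normalise to unit mass. Equality at $t_0$ then forces equality in each step of the chain in the proof of Theorem \ref{BBLonWRM} and its $p$-mean version, Corollary \ref{BBLgen}. In particular the scaling step gives \eqref{befeq}. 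Since $r\mapsto\bef{t}{K,N}(r)$ is strictly increasing for $K>0$ and strictly decreasing for $K<0$ (whenever $t\in(0,1)$), \eqref{befeq} yields $d(x,\tF_1(x))=\Theta_{A,B}$ for $\mu_0$-a.e. $x\in A$. Here $\Theta_{A,B}$ is the infimum when $K>0$ and the supremum when $K<0$.

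\textbf{Case $K>0$.} The key claim is $\Theta_{A,B}=0$. Take $x$ in the interior of $A$ (after removing a null set) with $d(x,\tF_1(x))=\Theta_{A,B}=\inf_{A\times B}d$. Suppose $\Theta_{A,B}>0$. Then $y=\tF_1(x)$ minimises the distance from $B$ to $A$, so $x$ realises $d(y,A)$, which places $x$ on the boundary of $A$. Since a.e. point of $A$ is interior (by compactness and convexity, $\partial A$ is $m$-null), this is a contradiction. Hence $\Theta_{A,B}=0$ and $\tF_1(x)=x$ a.e., so the transport is the identity, $\mu_0=\mu_1$, and $A=B$ up to null sets. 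Then $Z_t(A,A)\supseteq A$. Convexity gives $Z_t(A,A)\subseteq A$, and equality of measures, $m(Z_t)=m(A)$, follows from the Brunn-Minkowski identity since $\bef{t}{K,N}(0)=1$. So $Z_t(A,B)=A$ for all $t$ up to null sets.

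\textbf{Case $K<0$.} Now $d(x,\tF_1(x))=\sup_{A\times B}d=:D$ for a.e. $x\in A$. Each $x$ must therefore be at maximal distance $D$ from some point of $B$. I would pick an interior point $x$ and move it slightly along the direction $-\gamma'(0)$ of its transport geodesic, staying inside $A$. By the first variation formula this strictly increases $d(\cdot,\tF_1(x))$, contradicting maximality, unless $D=0$. But $D=0$ forces $A$ and $B$ to be the same single point, which is impossible because $m(A)>0$. This contradiction shows equality cannot occur.

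The main obstacle is making these boundary and first-variation arguments rigorous on a manifold. One must ensure the distance function is differentiable at the relevant points; this holds away from the cut locus, which is fine for $\mu_0$-a.e. $x$ by the Brenier-McCann structure. One must also ensure that $\partial A$ is $m$-null for compact convex $A$. I would handle both by restricting to points that are simultaneously interior, points of differentiability of $\tF_t$, and satisfying \eqref{befeq}, all of which have full $\mu_0$-measure.
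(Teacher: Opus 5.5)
Your argument is correct and follows the paper's overall plan. You realise Brunn--Minkowski through Corollary~\ref{BBLgen}, obtain $d(x,\tF_1(x))=\Theta_{A,B}$ for $\mu_0$-a.e.\ $x$ from strict monotonicity of $r\mapsto\bef{t_0}{K,N}(r)$, and then show that the points with this extremal property form an $m$-null set. The last step is where the two routes differ.

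The paper uses porosity. For $K>0$ and $\Theta_{A,B}>0$, the open ball $B_{\Theta_{A,B}}(\tF_1(x))$ misses $A$. By the triangle inequality it contains balls of radius $s'/2$ centred on the transport geodesic at distance $s'/2$ from $x$. So $A$ is porous at every such $x$, no such $x$ is a Lebesgue density point of $A$, and $m(A)=0$. The case $A\cap B\neq\emptyset$ is treated separately, and $K<0$ is declared analogous, using the exterior of $\bar B_{\Theta_{A,B}}(\tF_1(x))$.

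You instead show that such $x$ cannot be an interior point, and then invoke $m(\partial A)=0$. For $K>0$ you move towards $\tF_1(x)$, which handles both sub-cases at once. For $K<0$ you move away from $\tF_1(x)$ and use the first variation formula.

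The trade-offs are as follows.
\begin{itemize}
\item The porosity argument needs no convexity and works for arbitrary Borel $A,B$; convexity enters only in $Z_t(A,A)\subseteq A$.
\item Your route spends convexity on $m(\partial A)=0$. In a Riemannian manifold this is true but does not follow immediately from ``compactness and convexity'', and you must supply or cite it. One option is a local interior-cone condition for convex sets with nonempty interior, which makes $\partial A$ porous (the paper's tool again). Another is the Cheeger--Gromoll structure theory. Alternatively, replace ``interior point'' by ``density point'', and your $K>0$ step becomes the paper's.
\item Your $K<0$ step is more careful than the paper's one-line analogy. You correctly require $x\notin\mathrm{cut}(\tF_1(x))$, which holds $\mu_0$-a.e.\ by Cordero-Erausquin--McCann--Schmuckenschl\"ager. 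The paper's porosity claim for the exterior of $\bar B_{\Theta_{A,B}}(\tF_1(x))$ silently needs the same condition: at a cut point that exterior can be empty near $x$, as for antipodal points on a round sphere.
\end{itemize}

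Finally, your remark that $m(Z_t)=m(A)$ is only available at $t_0$, and it is redundant once you have $A\subseteq Z_t(A,A)\subseteq A$. As in the paper, that inclusion presupposes that every minimising geodesic between points of $A$ stays in $A$.
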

\begin{proof}
    For the case when $K>0$, we separate the situation when $A \cap B = \emptyset$ and when $A \cap B \neq \emptyset$.

    When $A \cap B \neq \emptyset$, $\Theta_{A,B} = 0$ and by equation \eqref{befeq}, $\tF_1(x) = x$ for almost all $x \in A$, implying that $A=B$ up to a set of measure $0$, which further implies that $A = B = Z_t(A,B)$ up to a set of null measure.

    When $A$ and $B$ are disjoint, we can apply the same argument constructed in \citep{BaloghZoltanM.2018EiBi} with $\frac{\cof{K,N}(tx)}{\cof{K,N}(x)}$ replaced by $\bef{t}{K,N}(x)$. We set $s:= \Theta_{A,B} = \inf_{(x,y) \in A\times B} d(x,y) = d(x,\tF_1(x))$ for almost every $x \in A$. Fix $x \in A$ such that $d(x,\tF_1(x))=s$, and $B_s(\tF_1(x)) \subset \bigcup_{y \in B} B_s(y)$. By construction, $B_s(\tF_1(x))$ is disjoint with $A$. Fix $s_0 \in (0,s)$ and for every $0 < s' < s_0$, for $z_{s'} \in Z_{\frac{s'}{2s}}(x,\tF_1(x))$, $B_\frac{s'}{2}(z_{s'})$ is disjoint with $A$. This implies that $A$ is porous at $x$, that for every $s'$ sufficiently small, there is a point $z_{s'}$ of distance $s'$ to $x$, such that there is an open ball with radius $\frac{s'}{2}$ that is outside $A$ centred at which. In this case $m(A)$ cannot be positive, which is a contradiction to the assumption that $m(A)>0$.

    In the case when $K<0$, by a similar construction of contradiction as in the case when $K>0$ and $A,B$ are disjoint, it can also be shown that for points $x \in A$ such that $d(x,\tF_1(x)) = \Theta_{A,B}$, $A$ is porous at $x$, implying that $m(A)=0$ which contradicts the assumption of $A$ having positive measure.
\end{proof}

\section{Further Problems}
Theorem \ref{rigthmgen} is concerned with the simplest situation of the rigidity of the Borell-Brascamp-Lieb inequality in metric measure geometry. There are a number of directions that it can be further generalised into.
\begin{enumerate}
    \item After the result on rigidity of the Borell-Brascamp-Lieb inequality, it is natural to consider the quantitative stability of it on smooth weighted Riemannian manifolds with $\Ric_{m,N}\geq Kg$ for $N \in [n,\infty)$.
    \item Another natural further problem is to consider the rigidity of the Borell-Brascamp-Lieb inequality with weaker dimension bounds, namely on smooth Riemannian manifolds with $\Ric_{m,N}\geq Kg$ for $N = \infty$ or $N \leq 0$.
    \item It is possible to generalise Theorem \ref{rigthmgen} to Finsler manifolds, on which the curvature-dimension condition is described in \citep{OhtaShin-ichi2009Fii}, by considering the equality of the Finsler interpolation inequality presented in the same paper.
    \item Following the proof of the Borell-Brascamp-Lieb inequality on metric measure spaces in \citep{BacherKathrin2010OBIo}, it is also natural to consider the rigidity of this inequality in a synthetic way, on $\CD(K,N)$ or $\mathrm{RCD}(K,N)$ metric measure spaces.
    \item Considering the Borell-Brascamp-Lieb inequality together with its rigidity and stability on Lorentzian manifolds with timelike Ricci curvature bounds is also a direction of generalisation, under the framework of optimal transport on Lorentzian manifolds constructed in \citep{McCannRobertJ.2020DcoB} and \citep{MondinoAndrea2023Aotf}. Recently, in \citep{CavallettiFabio2025Asii}, Cavalletti-Mondino proved the Brunn-Minkowski inequality on Lorentzian manifolds with timelike Ricci curvature bound, and in \citep{FarooquiOsama2025Ebtt} Farooqui showed the equivalence between timelike Brunn-Minkowski inequality and timelike Ricci curvature bound.
\end{enumerate}

\bibliographystyle{plainnat}
\bibliography{Primo_BibTeX.bib}

\end{document}